\begin{document}

\author{Pierre-Lo\"ic M\'eliot}
\title[Kerov's central limit theorem for Schur-Weyl measures]{Kerov's central limit theorem for Schur-Weyl \\ measures of parameter $\alpha=1/2$}
\address{The Gaspard--Monge Institut of electronic and computer science,
University of Marne-La-Vall\'ee Paris-Est,
77454 Marne-la-Vall\'ee Cedex 2, France}
\email{meliot@phare.normalesup.org}

\newcommand{\Z}{\mathbb{Z}}     
\newcommand{\N}{\mathbb{N}}    
\newcommand{\R}{\mathbb{R}}    
\newcommand{\C}{\mathbb{C}}    
\newcommand{\I}{\mathrm{i}}
\newcommand{\sym}{\mathfrak{S}}
\newcommand{\Comp}{\mathfrak{C}}   
\newcommand{\Part}{\mathfrak{P}}      
\newcommand{\proba}{\mathbb{P}}     
\newcommand{\esper}{\mathbb{E}}     
\newcommand{\card}{\mathrm{card}}  
\newcommand{\obs}{\mathscr{O}}
\newcommand{\GL}{\mathrm{GL}}
\newcommand{\SWa}{\mathrm{SW}_{n,\alpha,c}}
\newcommand{\wt}{\mathrm{wt}}
\newcommand{\eps}{\varepsilon}
\newcommand{\tilh}{\widetilde{h}}
\newcommand{\tilp}{\widetilde{p}}
\newcommand{\tilq}{\widetilde{q}}
\newcommand{\tr}{\mathrm{tr}}
\newcommand{\lle}{\left[\!\left[}              
\newcommand{\rre}{\right]\!\right]}    
\newcommand{\scal}[2]{\left\langle #1\vphantom{#2}\,\right |\left.#2 \vphantom{#1}\right\rangle}   
\newtheorem{theorem}{Theorem}
\newtheorem{proposition}[theorem]{Proposition}
\newtheorem{lemma}[theorem]{Lemma}
\newtheorem{corollary}[theorem]{Corollary}
\theoremstyle{remark}
\newtheorem*{example}{Example}
\newtheorem*{examples}{Examples}
\newcommand{\figcap}[2]{\begin{figure}[ht] \begin{center} 
							{\footnotesize{#1}} 
							\caption{#2} \end{center} \end{figure}}
\newcommand{\comment}[1]{}

\begin{abstract}
In this article, we show that Kerov's central limit theorem related to the fluctuations of Young diagrams under the Plancherel measure extends to the case of Schur-Weyl measures, which are the probability measures on partitions associated to the representations of the symmetric groups $\sym_{n}$ on tensor products of vector spaces $V^{\otimes n}$ (see \cite{Bia01}). The proof is inspired by the one given in \cite{IO02} for the Plancherel measure, and it relies on the combinatorics of the algebra of observables of diagrams; we shall also use \'Sniady's theory of cumulants of observables (\emph{cf.} \cite{Sni06}).
\end{abstract}
\maketitle

Given a finite group $G$ and a finite-dimensional complex linear representation $V$ of $G$, the decomposition in irreducible components $V=\bigoplus_{\lambda \in \widehat{G}}m_{\lambda}\,V_{\lambda}$ yields a probability measure on the set $\widehat{G}$ of isomorphism classes of irreducible representations of $G$:
$$\proba_{V}\left[\lambda \in \widehat{G}\right]=\frac{m_{\lambda}\,\dim V_{\lambda}}{\dim V}$$
In particular, when $G=\sym_{n}$ is the symmetric group of permutations of $\lle 1,n\rre$, a reducible representation of $\sym_{n}$ yields a probability measure on the set $\Part_{n}=\widehat{\sym}_{n}$ of integer partitions of size $n$. When $V=\C\sym_{n}$ is the (left) regular representation, one obtains the so-called Plancherel measure, which has been extensively studied in the $n \to \infty$ asymptotic; see for instance \cite{IO02}. In this paper, we focus on the \textbf{Schur-Weyl measures} that correspond to the representations $(\C^{N})^{n} \curvearrowleft \sym_{n}$, where $\sym_{n}$ acts on the right by permutation of the letters of the simple tensors:
$$(v_{1}\otimes v_{2}\otimes \cdots \otimes v_{n})\cdot \sigma = v_{\sigma(1)}\otimes v_{\sigma(2)} \otimes \cdots \otimes v_{\sigma(n)}$$
By Schur-Weyl reciprocity, the probability measure on partitions associated to this representation is also a probability measure on irreducible representations of $\GL(N,\C)$. The first order asymptotic of these measures when $n^{1/2} \simeq cN$ and $n$ goes to infinity has been determined by P. Biane in \cite{Bia01}; we shall recall this in \S\ref{swbiane}, and by using the Robinson-Schensted-Knuth correspondence, we will also give a ``combinatorial model'' for these probability measures.\bigskip

The asymptotics of the Plancherel measures on partitions correspond to the case $c=0$, and for Plancherel measures, S. Kerov has shown that Young diagrams deviate from their limit shape by a (generalized) gaussian process (\cite{Ker93}, \cite{IO02}). Consequently, it was natural to ask whether such a result extends to the case of Schur-Weyl measures with parameter $c>0$. We have found out that it is indeed the case, and that the central limit theorem for Schur-Weyl measures is up to a translation on the $x$-axis the same as for Plancherel measures. This is the main result of this paper, see Theorem \ref{global} in \S\ref{ksw}. Notice that our result is the ``Schur-Weyl dual'' of a result of Lytova and Pastur related to the deviation of the empirical spectral measure of Wishart matrices from the Mar\v{c}enko-Pastur distribution, \emph{cf.} \cite{LP09}. In order to prove Theorem \ref{global}, we shall follow the approach of V. Ivanov and G. Olshanski (\cite{IO02}), and we shall therefore use the algebra $\obs$ of \textbf{observables of diagrams}. These objects are in some sense ``polynomial functions on partitions'' (see \cite{KO94}), and in the setting of random partitions, their expectations play the same role as the moments for real or complex random variables. Moreover, P. \'Sniady has developed in \cite{Sni06} a theory of \textbf{cumulants of observables} of diagrams that allow to prove results of gaussian concentration. These facts are recalled in \S\ref{obs} and \S\ref{sniady}, and we will also compute certain observables of the limit shapes $\Omega_{c}$ of the Young diagrams under Schur-Weyl measures with parameters $\alpha=1/2,c>0$, \emph{cf.} \S\ref{limitshape}.\bigskip

The core of the proof of our central limit theorem is contained in \S\ref{core}, and it is shown that Chebyshev polynomials of the second kind yield random variables related to the second order asymptotic of the Schur-Weyl measures, and that are asymptotically centered independant gaussian variables. Our proof is slightly different from the one given in \cite[\S7]{IO02} for Plancherel measures, because in the setting of Schur-Weyl measures, one cannot use the so-called Kerov degree on observables. Hence, one has to use instead the weight filtration on observables, and the arguments of Ivanov and Olshanski that relie on Kerov's degree will be replaced by a somewhat nasty trick, see Lemma \ref{nasty}. Thus, our calculations present quite miraculous simplifications, and these simplifications can be summarized by the following fact: Chebyshev polynomials of the second kind do play a hidden and nethertheless prominent role in the combinatorics of observables of diagrams. We ought to say that we do not understand completely the reasons of the universality of Kerov's central limit theorem; in this paper, we only show that calculations can be performed in a more general setting than previously known, but as for now we do not know why.\bigskip\bigskip

\section{Schur-Weyl measures of parameter $\alpha=1/2$}\label{swbiane}
In the following, we fix two positive real numbers $\alpha$ and $c$. Recall that the irreducible representations of the symmetric group $\sym_{n}$ are labelled by partitions of size $n$, that is to say, non-increasing sequences 
$$\lambda=(\lambda_{1},\ldots,\lambda_{r})$$
of positive integers that sum up to $n$; see for instance \cite[Chapter 4]{Weyl39}. Such a sequence can be represented by its \textbf{Young diagram}, which is an array of $n$ boxes with $\lambda_{1}$ boxes in the first line, $\lambda_{2}$ boxes in the second line, etc.
\figcap{\yng(1,2,2,5,7)}{The Young diagram of the partition $\lambda=(7,5,2,2,1)$.}\bigskip

 We denote by $\Part_{n}$ the set of integer partitions of size $n$, and if $\lambda=(\lambda_{1},\ldots,\lambda_{r})$ is in $\Part_{n}$, we set $|\lambda|=n$ and $\ell(\lambda)=r$. We also denote by $\dim \lambda=\dim V_{\lambda}$ the dimension of the irreducible representation of $\sym_{n}$ of label $\lambda$; this dimension can be computed by using the hook length formula, \emph{cf.} \cite[Chapter 1, \S7]{Mac95}. Alternatively, by using the branching rules for the irreducible representations of the symmetric groups $\sym_{n}$, it is easy to show that 
$$\dim \lambda=\card\big\{\text{standard tableaux of shape }\lambda\big\}.$$
Here, a \textbf{standard tableau} of shape $\lambda$ is a numbering of the cases of the Young diagram of $\lambda$ by the integers $1,2,\ldots,n$, such that each row and each column is strictly increasing.
\begin{example}
The five standard tableaux of shape $(3,2)$ are drawn in figure \ref{standardtableaux}. Hence, $\dim(3,2)=5$.
\figcap{\young(45,123)\qquad\qquad\young(35,124)\qquad\qquad\young(34,125)\qquad\qquad\young(25,134)\qquad\qquad\young(24,135)}{The standard tableaux of shape $(3,2)$.\label{standardtableaux}}
\end{example}

 The \textbf{Schur-Weyl measure} of parameters $\alpha$ and $c$ on partitions of size $n$ is the probability measure on $\Part_{n}$ defined by:
$$\SWa[\lambda]=\frac{m_{\lambda}\,\dim \lambda}{N^{n}}$$
where $n^{\alpha}\simeq cN$ and the multiplicities $m_{\lambda}$ are the one coming from the decomposition of the (right-side) representation $(\C^{N})^{\otimes n} \curvearrowleft \sym_{n}$. Of course, these probability measures depend on the exact sizes $N$, but for the asymptotic analysis, we shall see that the parameters $\alpha$ and $c$ are sufficient. \bigskip

The general linear group $\GL(N,\C)$ acts on the left of the tensor product $V=(\C^{N})^{\otimes n}$ by
$$g\cdot(v_{1}\otimes v_{2}\otimes \cdots \otimes v_{n})=g(v_{1}) \otimes g(v_{2}) \otimes \cdots \otimes g(v_{n}),$$
and it is well-known that the algebras generated by the actions of $\GL(N,\C)$ and $\sym_{n}$ in $\mathrm{End}(V)$ are full mutual centralizers --- this is the Schur-Weyl duality phenomenon, see \cite[Chapter 4]{Weyl39}. Hence, $V$ can be decomposed as a direct sum of irreducible $(\GL(N,\C),\sym_{n})$-bimodules:
$$_{\GL(N,\C)\curvearrowright\!}\left\{(\C^{N})^{\otimes n}\right\}_{\!\curvearrowleft \sym_{n}}=\bigoplus_{\lambda}{\,\,}_{\GL(N,\C)\curvearrowright\!\!}\left\{M_{\lambda}\right\}\otimes_{\C}\left\{V_{\lambda}\right\}_{\!\curvearrowleft \sym_{n}}$$
The irreducible algebraic representations of $\GL(N,\C)$ are labelled by the sequences $\lambda=(\lambda_{1},\ldots,\lambda_{N})$ of non-negative integers, and consequently, the Schur-Weyl measure charges only the partitions of length $\ell(\lambda)\leq N$; moreover, $m_{\lambda}=\dim M_{\lambda}$. These dimensions can be given a combinatorial interpretation similar to the one for the $\dim V_{\lambda}$'s, see \cite[Appendix A, \S8]{Mac95}. Hence,
$$\dim M_{\lambda}=\card\big\{\text{semistandard tableaux of shape }\lambda\text{ with entries in }\lle 1,N\rre\big\},$$
where a \textbf{semistandard tableau} of shape $\lambda$ is a numbering of the cases of the Young diagram of $\lambda$ that is non-decreasing along the rows and strictly increasing along the columns. 
\begin{example}
The two semistandard tableaux of shape $(3,2)$ and entries in $\lle 1,2\rre$ are drawn in figure \ref{semistandardtableaux}. As a consequence, the dimension of the $\GL(2,\C)$-module $M_{3,2}$ is $2$.
\figcap{\young(22,111)\qquad\qquad\qquad\young(22,112)}{The semistandard tableaux of shape $(3,2)$ and entries in $\lle 1,2\rre$.\label{semistandardtableaux}}
\end{example}

Given a word of length $n$ over the alphabet $\lle 1,N\rre$, one can associate to it two tableaux of size $n$ and same shape by using the \textbf{Robinson-Schensted-Knuth algorithm}, see \cite{Ful97}. For instance, when $n=9$ and $N=5$, the word $m=233154243$ is associated to the two tableaux\vspace{2mm}
$$ \young(5,234,12334)\qquad\text{and}\qquad \young(7,469,12358)\,.$$
\vspace{0.1mm}

\noindent The first tableau is semistandard with entries in $\lle 1,N\rre$, and can be obtained by applying the jeu de taquin algorithm to the ribbon of the word $m$; the second tableau is always a standard tableau. Hence, there is a bijection
$$\mathrm{RSK} : \big\{\text{words of length }n\text{ over }\lle 1,N\rre \big\}\to \bigsqcup_{\lambda \in \Part_{n}}\mathcal{ST}(\lambda) \times \mathcal{SST}(\lambda,N),$$
and if one reminds only the shape $\mathrm{RSKsh}(m)$ of the tableaux associated to a word $m$ by $\mathrm{RSK}$, then the Schur-Weyl measure is precisely the image of the uniform law on $N^{n}$ by the map $\mathrm{RSKsh}$. Hence, one obtains a combinatorial model for the measures $\SWa$, and moreover, it can be shown that the size of the first part of $\mathrm{RSKsh}(m)$ is always equal to the length of a non-decreasing subsequence of maximal length in $m$; consequently, the study of Schur-Weyl measures is related to analogues of Ulam's problem (\cite{Ulam61}).\bigskip

If $\lambda$ is a Young diagram, we turn it by 45 degrees and we consider the upper boundary of the new drawing as a continuous function $s\mapsto \lambda(s)$, with by convention $\lambda(s)=|s|$ if $s$ is too big (see figure \ref{russiandiagram}).
\figcap{\includegraphics{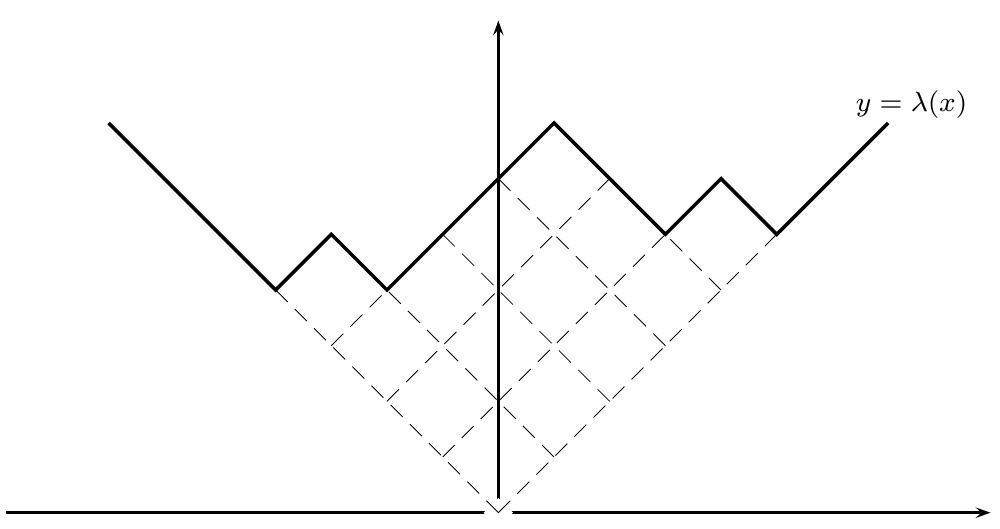}
}{Function $s\mapsto \lambda(s)$ associated to the Young diagram $\lambda=(5,4,4,1)$.\label{russiandiagram}}

\comment{\psset{unit=1mm}
\pspicture(-50,0)(50,50)
\psline{->}(-50,0)(50,0)
\psline{->}(0,0)(0,50)
\rput*[0,0]{45}(0,0){\psline[linestyle=dashed, linewidth=0.25pt](0,0)(0,32) \psline[linestyle=dashed, linewidth=0.25pt](8,0)(8,32) \psline[linestyle=dashed, linewidth=0.25pt](16,0)(16,24) \psline[linestyle=dashed, linewidth=0.25pt](24,0)(24,24) \psline[linestyle=dashed, linewidth=0.25pt](32,0)(32,24) \psline[linestyle=dashed, linewidth=0.25pt](40,0)(40,8)
\psline[linestyle=dashed, linewidth=0.25pt](0,0)(40,0) \psline[linestyle=dashed, linewidth=0.25pt](0,8)(40,8) \psline[linestyle=dashed, linewidth=0.25pt](0,16)(32,16) \psline[linestyle=dashed, linewidth=0.25pt](0,24)(32,24) \psline[linestyle=dashed, linewidth=0.25pt](0,32)(8,32)
\psline[linewidth=1pt](0,56)(0,32)(8,32)(8,24)(32,24)(32,8)(40,8)(40,0)(56,0)
}
\rput(42,41.5){$y=\lambda(x)$}
\endpspicture}

This interpretation permits to rescale the Young diagrams, and in particular, we shall denote by $\lambda^{*}$ the Young diagram $\lambda$ rescaled by a factor $1/\sqrt{|\lambda|}$ in both directions: 
$$\lambda^{*}(s)=\lambda(s\sqrt{n})/\sqrt{n}$$
Then, P. Biane has shown that under Schur-Weyl measures with parameter $\alpha \geq 1/2$, rescaled Young diagrams admit a limit form (see Theorem 3 in \cite{Bia01}):
\figcap{\includegraphics{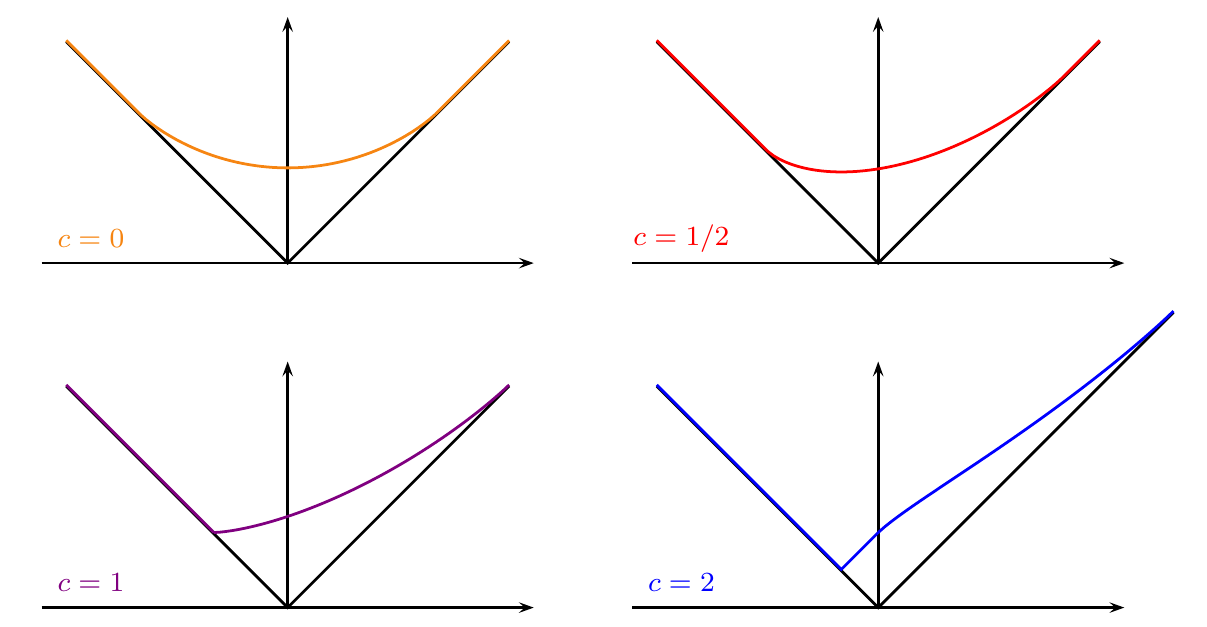}}{Limit shapes of the rescaled Young diagrams under Schur-Weyl measures of parameter $\alpha=1/2$.}

\comment{\psset{unit=0.5mm}
\pspicture(-50,-73)(170,53)
\psline{->}(-50,0)(50,0)
\psline{->}(-50,-70)(50,-70)
\psline{->}(70,0)(170,0)
\psline{->}(70,-70)(170,-70)
\psline{->}(0,0)(0,50)
\psline{->}(0,-70)(0,-20)
\psline{->}(120,0)(120,50)
\psline{->}(120,-70)(120,-20)
\psline(-45,45)(0,0)(45,45)
\psline(-45,-25)(0,-70)(45,-25)
\psline(75,45)(120,0)(165,45)
\psline(75,-25)(120,-70)(180,-10)
\rput(-40,5){\textcolor{BurntOrange}{$c=0$}}
\rput(-40,-65){\textcolor{violet}{$c=1$}}
\rput(80,5){\textcolor{red}{$c=1/2$}}
\rput(80,-65){\textcolor{blue}{$c=2$}}
\parametricplot[linecolor=BurntOrange]{-2}{2}{t 15 mul
t 2 div arcsin t mul 3.14159 mul 180 div t t mul -1 mul 4 add sqrt add 30 3.14159 div mul 0.25 add}
\psline[linecolor=BurntOrange](-45,45.25)(-30,30.25)
\psline[linecolor=BurntOrange](45,45.25)(30,30.25)
\parametricplot[linecolor=red]{-1.5}{2.5}{t 15 mul 120 add
t 0.5 add t 2 div 1 add sqrt 2 mul div arcsin t mul 1.75 t 2 div add t 2 div 1 add sqrt 2 mul div arccos 2 mul add 3.14159 mul 180 div  t 0.5 -1 mul add t 0.5 -1 mul add mul -1 mul 4 add sqrt 2 div add 30 3.14159 div mul 0.25 add}
\psline[linecolor=red](97.5,22.75)(75,45.25)
\psline[linecolor=red](165,45.25)(157.5,37.75)
\parametricplot[linecolor=violet]{-1}{3}{t 15 mul
t 1 add sqrt 2 div arcsin t mul t 1 add sqrt 2 div arccos add 3.14159 mul 180 div  t -1 add t -1 add mul -1 mul 4 add sqrt 2 div add 30 3.14159 div mul -69.75 add}
\psline[linecolor=violet](-15,-54.75)(-45,-24.75)
\parametricplot[linecolor=blue]{0}{4}{t 15 mul 120 add
t 2 add t 2 mul 1 add sqrt 2 mul div arcsin t mul t 2 mul -2 add t 2 mul 1 add sqrt 2 mul div arccos 0.5 mul add 3.14159 mul 180 div  t t mul -1 mul 4 t mul add sqrt 2 div add 30 3.14159 div mul -69.75 add}
\psline[linecolor=blue](75,-24.75)(112.5,-62.25)(120,-54.75)
\endpspicture
}

\begin{theorem}[First order asymptotics of Schur-Weyl measures of parameter $\alpha=1/2$]\label{firstasymptotic}
We fix $\eps>0$, and we suppose that $\alpha=1/2$. As $n$ goes to infinity,
$$\SWa[\|\lambda^{*}-\Omega_{c} \|_{\infty} \geq \eps]\to 0,$$ 
where $\Omega_{c}$ is defined by the following formulas:
\begin{align*}&\Omega_{0}(s)=\Omega(s)=\begin{cases}
            \frac{2}{\pi} \left(s \arcsin(\frac{s}{2})+\sqrt{4-s^2} \right) &\text{if }|s| \leq 2,\\
|s| &\text{otherwise;}
            \end{cases}\\
&\Omega_{c\in \,]0,1[}(s)=\begin{cases}
            \frac{2}{\pi} \left(s \arcsin(\frac{s+c}{2\sqrt{1+sc}})+\frac{1}{c}\arccos(\frac{2+sc-c^{2}}{2\sqrt{1+sc}})+\frac{\sqrt{4-(s-c)^2}}{2} \right) &\text{if }|s-c| \leq 2,\\
|s| &\text{otherwise;}
            \end{cases}\\
            &\Omega_{1}(s)=\begin{cases}
            \frac{s+1}{2}+\frac{1}{\pi}\left((s-1)\arcsin(\frac{s-1}{2})+\sqrt{4-(s-1)^{2}}\right)&\text{if }|s-1|\leq 2,\\
            |s|&\text{otherwise;}
            \end{cases}\\
            &\Omega_{c >1}(s)=\begin{cases}s+\frac{2}{c}&\text{if } s\in\,]\frac{-1}{c},c-2[\,,\\
            \frac{2}{\pi} \left(s \arcsin(\frac{s+c}{2\sqrt{1+sc}})+\frac{1}{c}\arccos(\frac{2+sc-c^{2}}{2\sqrt{1+sc}})+\frac{\sqrt{4-(s-c)^2}}{2} \right) &\text{if }|s-c| \leq 2,\\
|s| &\text{otherwise.}
            \end{cases}
\end{align*}
The case when $c=0$ corresponds to the limit shape of rescaled Young diagrams under Schur-Weyl measures with parameter $\alpha>1/2$.
\end{theorem}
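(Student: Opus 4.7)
The plan is to reduce the uniform convergence of profiles to the convergence in probability of the free cumulants of the rescaled Young diagrams, and this is best carried out through the algebra $\obs$ of observables of diagrams together with Kerov's Markov-Krein correspondence. The key input is an elementary consequence of Schur-Weyl duality: for $\sigma\in \sym_{n}$ of cycle type $\mu\sqcup 1^{n-|\mu|}$, the trace of $\sigma$ acting on $(\C^{N})^{\otimes n}$ equals $N^{\ell(\mu)+n-|\mu|}$, since each cycle of $\sigma$ must be ``colored'' by a single letter of $\lle 1,N\rre$. Combined with the bimodule decomposition $(\C^{N})^{\otimes n}=\bigoplus_{\lambda}M_{\lambda}\otimes V_{\lambda}$ and with the expression $\SWa[\lambda]=m_{\lambda}\dim \lambda/N^{n}$, this yields the explicit identity
$$\esper_{\SWa}\!\left[\frac{\chi^{\lambda}(\sigma_{\mu}\sqcup 1^{n-|\mu|})}{\dim \lambda}\right]= N^{\ell(\mu)-|\mu|}.$$

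Introducing the normalized characters $\Sigma_{\mu}(\lambda)=n(n-1)\cdots(n-|\mu|+1)\,\chi^{\lambda}(\sigma_{\mu}\sqcup 1^{n-|\mu|})/\dim \lambda$, which (together with the power sums $p_{k}$) generate $\obs$, the identity above yields
$$\esper_{\SWa}[\Sigma_{\mu}]\,\simeq\, c^{|\mu|-\ell(\mu)}\,n^{(|\mu|+\ell(\mu))/2}$$
in the regime $\alpha=1/2$ and $n^{1/2}\simeq cN$; these values sit at the top of the weight filtration on $\obs$, where $\Sigma_{\mu}$ has weight $|\mu|+\ell(\mu)$. Via the transition formulas between the $\Sigma_{\mu}$'s and the free cumulants $R_{k}$, which will be recalled in \S\ref{obs}, this determines the limiting values of the rescaled free cumulants $R_{k}(\lambda^{*})$; and by the Markov-Krein correspondence, this identifies the limiting rescaled transition measure $m_{\Omega_{c}}$ as the free convolution of a semicircular distribution with a Bernoulli-type component. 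Inverting the Cauchy transform of $m_{\Omega_{c}}$ then yields the piecewise trigonometric formulas of the statement. To upgrade convergence of expectations into convergence in probability, I would use the multiplicative relation $\Sigma_{\mu}\cdot \Sigma_{\nu}=\Sigma_{\mu\sqcup \nu}+\text{(lower weight)}$ together with the formula above: a direct computation shows that $\mathrm{Var}_{\SWa}(\Sigma_{\mu})$ is of lower weight than $\esper_{\SWa}[\Sigma_{\mu}]^{2}$, hence negligible after rescaling. Combined with an \emph{a priori} bound on the support of the random measures $m_{\lambda^{*}}$ (which follows for instance from the sub-Gaussian tail of the longest non-decreasing subsequence of a uniform random word via the RSK combinatorial model), this gives weak convergence in probability of $m_{\lambda^{*}}$ to $m_{\Omega_{c}}$, which is equivalent to the uniform convergence of the rescaled profiles asserted in the statement.

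The main obstacle I anticipate is the analytic identification of $\Omega_{c}$ from its free cumulants, together with the resolution of the phase transition occurring at $c=1$. In the subcritical regime $0<c<1$, the support of $m_{\Omega_{c}}$ is a single interval and $\Omega_{c}$ consists of an arcsine arc glued to the $|s|$ tails. In the supercritical regime $c>1$, a Dirac atom detaches from the bulk of $m_{\Omega_{c}}$, which manifests geometrically as the extra linear segment of slope $1$ between $s=-1/c$ and $s=c-2$; this is what produces the first case of the $c>1$ formula in the statement. The critical case $c=1$ is a marginal one, and can be recovered either by continuity as $c\to 1$, or directly by an ad hoc Stieltjes transform computation at the critical parameter. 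Once the four regimes are analytically separated, the probabilistic part of the argument is robust and applies uniformly in $c$ over compact subsets of $\R_{+}\setminus\{1\}$.
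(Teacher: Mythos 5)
Your proposal follows essentially the same route as the paper's own argument (which is Biane's): compute $\esper[\varSigma_{\mu}]$ from the Schur--Weyl trace $N^{\ell(\mu)-|\mu|}$, use the weight filtration together with the factorization property $\varSigma_{\mu}\varSigma_{\nu}=\varSigma_{\mu\sqcup\nu}+(\text{lower weight})$ to get concentration and hence $R_{k}(\lambda^{*})\to c^{k-2}$ in probability, then identify $\Omega_{c}$ by inverting the resulting Cauchy transform $G_{c}$, with the atom at $-1/c$ (hence the slope-one segment) appearing for $c>1$. The only slip is descriptive: the limiting transition measure is a translated Mar\v{c}enko--Pastur (free Poisson) law, not a free convolution of a semicircular law with a Bernoulli-type component, but this does not affect the argument since your identification proceeds via Stieltjes inversion anyway.
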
\bigskip

When $\alpha<1/2$, the ``isotropic'' scaling by a factor $1/\sqrt{n}$ in both directions is no more adequate, and in fact, it can be shown that the order of magnitude of the parts of $\lambda$ under Schur-Weyl measures with parameter $\alpha<1/2$ is a $O(n^{1-\alpha})$, see \cite[\S6]{FM10}. On the other hand, the function $\Omega_{0}$ is also the limit shape of rescaled Young diagram under the so-called Plancherel measure, and in this case, a central limit theorem for the fluctuation
$$\sqrt{n}\,\Delta_{\lambda}(s)=\sqrt{n}\,(\lambda^{*}(s)-\Omega_{0}(s))$$
has been proved by S. Kerov, see \cite{Ker93} and \cite{IO02}. Our paper is concerned with the analogue of this result for Schur-Weyl measures with parameters $\alpha=1/2$ and $c>0$. In the  next sections, we shall start by presenting the tools commonly used for the asymptotic study of representations of symmetric groups.
\bigskip
\bigskip

\section{Observables of diagrams}\label{obs}
In the setting of random partitions, the algebra of polynomial functions\footnote{In the same setting, another fruitful approach relie on determinantal point processes, see \emph{e.g.} \cite{BO00} and \cite{BO05}.} on Young diagrams and its various bases play an extremely important role that can be compared to the one played by the moments of a real random variable. Hence, the \textbf{generating function} of a Young diagram $\lambda$ is defined by
$$G_{\lambda}(z)=\frac{\prod_{i=1}^{v-1}z-y_{i}}{\prod_{i=1}^{v}z-x_{i}},$$
where $x_{1}<y_{1}<x_{2}<y_{2}<\cdots<x_{v-1}<y_{v-1}<x_{v}$ are the \textbf{interlacing coordinates} of $\lambda$, that is to say, the sequences of local minima and local maxima of the function $s\mapsto \lambda(s)$. Alternatively, if $\sigma(s)=\frac{\lambda(s)-|s|}{2}$, then 
$$G_{\lambda}(z)=\frac{1}{z}\exp\left(-\int_{\R}\frac{\sigma'(s)}{z-s}\,ds\right).$$
This latter definition allows to consider the generating function of more general objects such as \textbf{continuous Young diagrams}, \emph{i.e.}, functions $s\mapsto \omega(s)$ that satisfy the two following properties:
\begin{enumerate}
\item $\omega$ is Lipschitz with constant $1$, \emph{i.e.}, $|\omega(s)-\omega(t)|\leq |s-t|$ for all $s,t$.
\item $\omega(s)=|s|$ for $s$ big enough.
\end{enumerate}
Given a (continuous) diagram $\lambda$, the coefficients $\tilh_{n}(\lambda)$ of the power serie $z^{-1}G_{\lambda}(z^{-1})$ generate an algebra $\obs$ of \textbf{observables of diagrams}. One may also consider as an algebraic basis for $\obs$ the coefficients $\tilp_{n}(\lambda)$ given by the formula
$$z^{-1}G_{\lambda}(z^{-1})=1+\sum_{n=1}^{\infty} \tilh_{n}(\lambda)\,z^n=\exp\left(\sum_{n=1}^{\infty} \frac{\tilp_{n}(\lambda)}{n}\,z^{n}\right).$$
For a true Young diagram $\lambda$, $\tilp_{n}(\lambda)=\sum_{i=1}^{v}(x_{i})^{n}-\sum_{i=1}^{v-1}(y_{i})^{n}$ is the $n$-th moment of the interlacing coordinates, whereas for a continuous diagram, one can give the following formula:
$$\tilp_{n}(\omega)=\int_{\R} \sigma''(s)\,s^{n}\,ds=\frac{1}{(n+1)(n+2)}\int_{\R}\sigma(s)\,s^{n+2}\,ds$$
Notice that $\tilh_{1}(\lambda)=\tilp_{1}(\lambda)=0$ for any (continuous) diagram $\lambda$; on the other hand, one can show that the other coefficients are algebraically independant (viewed as functions on diagrams). So, $$\obs=\C[\tilh_{2},\tilh_{3},\ldots]=\C[\tilp_{2},\tilp_{3},\ldots],$$ see \cite[\S2]{IO02}. The \textbf{weight grading} on this algebra is then defined by setting $\wt(\tilp_{k\geq 2})=k$, and it provides a filtration of algebra on $\obs$ that is well-behaved with respect to scaling of continuous diagrams. More precisely, if $\omega$ is a continuous diagram and if $\omega^{t}$ is the scaled diagram defined by
$$\omega^{t}(s)=\frac{\omega(st)}{t}$$
(see figure \ref{scaling}), then for any homogeneous observable $f$ of weight $k$, $f(\omega^{t})=t^{k}\,f(\omega)$. In particular, if $\lambda$ is a true Young diagram and if $\lambda^{*}=\lambda^{1/\sqrt{|\lambda|}}=\lambda^{1/\sqrt{n}}$, then $f(\lambda^{*})=n^{-\frac{k}{2}}\,f(\lambda)$.
\comment{\psset{unit=1mm}\pspicture(-50,0)(50,50)
\psline{->}(-50,0)(50,0)
\psline{->}(0,0)(0,50)
\parametricplot[border=2mm,bordercolor=white]{-2}{2}{t 20 mul 
                       t t 0.5 mul arcsin mul 0.017453 mul 4 t t mul neg add sqrt add 12.732395 mul t t mul t mul t mul t t mul 8 mul neg add 16 add 0.5 neg mul t mul add}
\parametricplot[border=2mm,bordercolor=white]{-2}{2}{t 20 mul 0.5 mul
                       t t 0.5 mul arcsin mul 0.017453 mul 4 t t mul neg add sqrt add 12.732395 mul t t mul t mul t mul t t mul 8 mul neg add 16 add 0.5 neg mul t mul add 0.5 mul}
\psline(-47,47)(-20,20)
\psline(47,47)(20,20)
\psline[linewidth=0.25pt](20,20)(0,0)(-20,20)
\rput(-35,39.5){$\omega$}
\rput(-13,21){$\omega^{1/2}$}
\endpspicture}

\figcap{\includegraphics{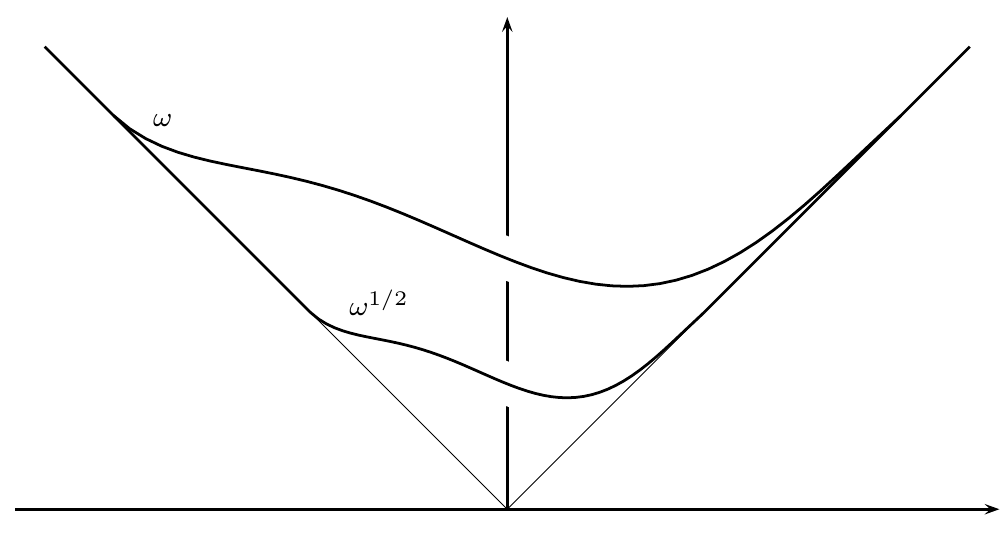}}{A continuous Young diagram $\omega$ and a rescaled version $\omega^{t}$. \label{scaling}}
\bigskip

Another algebraic basis of $\obs$ consists in the \textbf{free cumulants} of the transition measures of the continuous Young diagrams. First, if $\lambda$ is a (continuous) Young diagram, we define its \textbf{transition measure} $\mu_{\lambda}$ by
$$G_{\lambda}(z)=\int_{\R}\frac{\mu_{\lambda}(s)}{z-s}.$$
In other words, the generating function of $\lambda$ is the Stieltjes transform of the transition measure. Notice that the $\tilh_{n}(\lambda)$'s are precisely the moments of the compactly supported measure $\mu_{\lambda}$. Then, the free cumulants of the measure $\mu_{\lambda}$ are the coefficients of the formal inverse $R_{\lambda}$ of $G_{\lambda}$ in a neighboorhood of infinity:
$$R_{\lambda}(G_{\lambda}(z))=z \,\,\,\text{for }z\text{ big enough}\qquad;\qquad R_{\lambda}(z)=\frac{1}{z}\left(1+\sum_{n=1}^{\infty}R_{n}(\mu_\lambda)\,z^{n}\right)$$
see \cite{VDN92} for the uses of this notion in free probability. By Lagrange inversion, the free cumulants also satisfy the formula
$$R_{n+1}(\lambda)=R_{n+1}(\mu_{\lambda})=-\frac{1}{n}[z^{-1}](G_{\lambda}(z))^{-n},$$
and $R_{n+1}$ is therefore an observable of diagrams that is homogeneous with weight $n+1$. R. Speicher has given a simple combinatorial interpretation of the change of basis formulas between the $\tilh$'s and the $R$'s, see \cite{NS06}. These formulas implie in particular that the $R_{k\geq 2}$'s are algebraically independant, so $\obs=\C[R_{2},R_{3},\ldots]$.
\bigskip

Finally, another algebraic basis comes from the so-called \textbf{central characters}, that allow to relie the algebra $\obs$ to the representation theory of the symmetric groups. If $\lambda$ and $\mu$ are two partitions of same size $n$, one denotes by $\chi^{\lambda}(\mu)$ the value of the normalized irreducible character of $\sym_{n}$ of label $\lambda$ on a permutation $\sigma_{\mu}$ of cycle type $\mu$. Then, if $\lambda$ and $\mu$ are two partitions of sizes $n$ and $k$, one sets:
$$\varSigma_{\mu}(\lambda)=\begin{cases} n^{\downarrow k}\,\chi^{\lambda}(\mu\sqcup 1^{n-k})&\text{if }n\geq k,\\
0&\text{otherwise}.
\end{cases}$$
Here, $\mu\sqcup 1^{n-k}$ is the completed partition obtained by adding parts of size $1$ to $\mu$, and $n^{\downarrow k}$ is the falling factorial $n(n-1)\cdots(n-k+1)$. It is true, although totally non trivial, that the $\varSigma_{\mu}$'s are observables of diagrams; \emph{cf.} \cite[\S3-4]{IO02}. More precisely, $\wt(\varSigma_{\mu})=|\mu|+\ell(\mu)$, and the top homogeneous component of $\varSigma_{\mu}$ is 
$$R_{\mu+1}=R_{\mu_{1}+1}R_{\mu_{2}+1}\cdots R_{\mu_{r}+1},$$
see \cite{Bia98} and \cite[\S10]{IO02}. As a consequence, $\obs=\C[\varSigma_{1},\varSigma_{2},\ldots]$ is freely generated by the central characters of cycles, and one has a factorization property of characters in top homogeneous component:
$$\varSigma_{\mu_{1}}\,*\,\varSigma_{\mu_{2}}=\varSigma_{\mu_{1}\sqcup \mu_{2}}+\big(\text{observable of weight less than }\mu_{1}+\mu_{2}+\ell(\mu_{1})+\ell(\mu_{2})-2\big)$$
In fact, there exists multivariate polynomials with non-negative integer coefficients $K_{k}$ such that $\varSigma_{k}=K_{k}(R_{k+1},R_{k},\ldots,R_{2})$ for any $k \geq 1$; these \textbf{Kerov polynomials} are studied for instance in \cite{DFS08}. In \S\ref{core}, we will rather have to exprime the $\tilp$'s as polynomials in the $\varSigma$'s. We refer to \cite[Proposition 3.7]{IO02} for a proof of the following result:
\begin{proposition}[Change of basis formula between $\tilp$'s and $\varSigma$'s]\label{changeofbasis}
For $k \geq 2$, $\tilp_{k}$ is the top homogeneous component of weight $k$ of the observable
$$\sum_{\substack{\mu=1^{m_{1}}2^{m_{2}}\cdots s^{m_{s}}\\|\mu|+\ell(\mu)=k }} \frac{k^{\downarrow \ell(\mu)}}{\prod_{i\geq 1} m_{i}!}\prod_{i\geq 1}(\varSigma_{i})^{m_{i}}\,.$$
\end{proposition}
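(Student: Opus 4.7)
The plan is to reduce the statement to an identity in the polynomial algebra generated by the free cumulants $R_j$, and then to prove that identity via a single application of the Lagrange--B\"urmann formula. First I would unpack the right-hand side: every monomial $\prod_i (\varSigma_i)^{m_i}$ indexed by $\mu=1^{m_1}2^{m_2}\cdots$ with $|\mu|+\ell(\mu)=k$ has weight exactly $\sum_i (i+1)m_i=k$, so extracting the weight-$k$ homogeneous component commutes with the sum. Because the top component of $\varSigma_i$ is $R_{i+1}$ and the top component of a product is the product of the top components, the weight-$k$ part of the proposed sum equals
$$T_k \;:=\; \sum_{\substack{\mu=1^{m_1}2^{m_2}\cdots\\|\mu|+\ell(\mu)=k}} \frac{k^{\downarrow \ell(\mu)}}{\prod_i m_i!}\prod_i (R_{i+1})^{m_i}.$$
Reindexing $\mu$ by the partition $\nu$ of $k$ with all parts $\geq 2$ obtained via $n_j:=m_{j-1}$, one has $T_k=\sum_{\nu\vdash k,\,\nu_i\geq 2}\frac{k^{\downarrow \ell(\nu)}}{\prod_{j}n_j!}\prod_{j\geq 2} R_j^{n_j}$. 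Since the $R_j$ with $j\geq 2$ are algebraically independent generators of $\obs$, it suffices to prove $\tilp_k=T_k$ in $\C[R_2,R_3,\ldots]$.

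For this, I would pass to the Voiculescu transform. Set $\phi(x):=1+\sum_{j\geq 2}R_j\,x^j$, so that $R(w)=\phi(w)/w$ (using $R_1=0$). The substitution $v=1/z$ converts $G$ into the power series $\tilde G(v):=v\,(1+\sum_{n\geq 1}\tilh_n v^n)$; the relation $R\circ G=\mathrm{id}$ translates into $\tilde G$ being the compositional inverse of $\tilde R(x):=x/\phi(x)$, and from $\tilde R\circ\tilde G=\mathrm{id}$ one reads off the fixed-point identity $\tilde G(z)=z\,\phi(\tilde G(z))$. Consequently
$$\sum_{n\geq 1}\frac{\tilp_n}{n}\,z^n \;=\; \log\!\left(1+\sum_{n\geq 1}\tilh_n\, z^n\right) \;=\; \log\tilde G(z)-\log z \;=\; \log \phi(\tilde G(z)).$$

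Now I would apply the Lagrange--B\"urmann inversion formula to $\tilde G=\tilde R^{-1}$ with the test function $F=\log \phi$ (which vanishes at $0$):
$$\log\phi(\tilde G(z)) \;=\; \sum_{n\geq 1}\frac{z^n}{n}\,[x^{n-1}]\,\frac{\phi'(x)}{\phi(x)}\,\phi(x)^n \;=\; \sum_{n\geq 1}\frac{z^n}{n}\,[x^{n-1}]\,\phi'(x)\,\phi(x)^{n-1}.$$
Using $\phi'\phi^{n-1}=\tfrac{1}{n}(\phi^n)'$ and $[x^{n-1}](\phi^n)'=n[x^n]\phi^n$, this collapses to the clean formula $\tilp_n=[x^n]\,\phi(x)^n$. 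A direct multinomial expansion of $(1+\sum_{j\geq 2}R_j x^j)^n$, with $a_0$ copies of $1$ and $a_j$ copies of $R_j x^j$ subject to $a_0+\sum_{j\geq 2}a_j=n$ and $\sum_{j\geq 2}ja_j=n$, then yields the coefficient $n!/(a_0!\prod a_j!)=n^{\downarrow \ell(\nu)}/\prod n_j!$ of each monomial $\prod R_j^{n_j}$, matching $T_n$ term by term.

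The whole argument is more bookkeeping than substance: the only real subtlety I anticipate is the change of variable $v=1/z$, which must be set up carefully so that Lagrange--B\"urmann --- a statement about formal power series at $0$ --- applies to the Laurent series $G$ at infinity. Once this dictionary is in place, the identity $\tilp_n=[x^n]\phi(x)^n$, arguably more natural than the form stated in the proposition, drops out from a one-line derivative trick, and the multinomial theorem finishes the job.
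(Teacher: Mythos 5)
Your proof is correct. It is worth pointing out that the paper itself contains no proof of Proposition \ref{changeofbasis}: it simply defers to \cite[Proposition 3.7]{IO02}, where the identity is established inside Ivanov--Olshanski's framework of polynomial functions on Young diagrams. Your route is a legitimate self-contained alternative. The reduction to the graded identity $\tilp_k=T_k$ in $\C[R_2,R_3,\ldots]$ is sound (it uses that the associated graded algebra is a polynomial ring, hence a domain, so the top component of a product is the product of the top components, and that each monomial $\prod_i(\varSigma_i)^{m_i}$ has weight exactly $\sum_i(i+1)m_i=k$), and the analytic core is the single formula $\tilp_n=[x^n]\,\phi(x)^n$ with $\phi(x)=1+\sum_{j\geq 2}R_j x^j$, obtained from the fixed-point equation $\tilde{G}(z)=z\,\phi(\tilde{G}(z))$ by Lagrange--B\"urmann with test function $\log\phi$. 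The change of variable $v=1/z$ that you flag is indeed harmless: $R_\lambda(G_\lambda(z))=z$ is an identity of formal Laurent series at infinity, and rewriting it at the origin gives precisely the hypothesis of formal Lagrange inversion (over any $\Q$-algebra, so the division by $n$ is licit). Two small points you should make explicit in a final write-up: you use $R_1=0$, which holds since $R_1=\tilh_1=0$ for all (continuous) diagrams; and your computation proves the identity evaluated on each diagram, so to conclude an identity of elements of $\obs$ one either invokes the algebraic independence of the $R_{j\geq 2}$'s (stated in \S\ref{obs}) or treats the $R_j$'s as formal indeterminates throughout. As a check, $[x^n]\phi(x)^n$ reproduces the paper's examples, e.g.\ the weight-$6$ component $6R_6+30R_4R_2+15R_3^2+20R_2^3$ of $\tilp_6$. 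What your approach buys is the compact intermediate identity $\tilp_n=[x^n]\phi(x)^n$, arguably cleaner than the stated change-of-basis formula; the citation to \cite{IO02} instead situates the result in the general theory of shifted symmetric functions, which your argument does not need.
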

\begin{examples}
Let us write the first $\tilp_{k}$'s in the linear basis of central characters $(\varSigma_{\mu})_{\mu \in \bigsqcup_{n \in \N}\Part_{n}}$:
\begin{align*}
\tilp_{1}&=0\quad\qquad\,;\qquad\tilp_{2}=2\varSigma_{1}\\
\tilp_{3}&=3\varSigma_{2}\qquad;\qquad\tilp_{4}=4\varSigma_{3}+6\varSigma_{1,1}+2\varSigma_{1}\\
\tilp_{5}&=5\varSigma_{4}+20\varSigma_{2,1}+15\varSigma_{2}\\
\tilp_{6}&=6\varSigma_{5}+30\varSigma_{3,1}+15\varSigma_{2,2}+20\varSigma_{1,1,1}+30\varSigma_{1,1}+60\varSigma_{3}+2\varSigma_{1}
\end{align*}
Hence, one sees for instance that $\tilp_{6}$ is the top homogeneous component of $6\varSigma_{5}+30\varSigma_{3,1}+15\varSigma_{2,2}+20\varSigma_{1,1,1}$, and if one uses the factorization property of the central characters, one sees that these formulas agree with Proposition \ref{changeofbasis}.
\end{examples}
\bigskip
\bigskip

\section{Some computations around the limit shapes $\Omega_{c}$}\label{limitshape}
By using observables of diagrams, it is very easy to prove Theorem \ref{firstasymptotic}; let us recall briefly this proof. If $\lambda$ is a partition of size $n$ picked randomly according to a Schur-Weyl measure, then any observable of diagram $f \in \obs$ yields a random variable $f(\lambda)$. When $f=\varSigma_{\mu}$, the expectation of the random variable $\varSigma_{\mu}(\lambda)$ is easy to compute:
\begin{align*}\SWa[\varSigma_{\mu}]&=\begin{cases}n^{\downarrow |\mu|}\,\tr_{(\C^{N})^{\otimes n}}(\sigma_{\mu\sqcup 1^{n-|\mu|}}) &\text{if }n \geq |\mu|,\\
0&\text{otherwise},
\end{cases}\\
&\simeq_{n \to \infty} n^{|\mu|}\,N^{\ell(\mu)-|\mu|} \simeq c^{|\mu|-\ell(\mu)}\,n^{\frac{|\mu|+\ell(\mu)}{2}},
\end{align*}
assuming that $\alpha=1/2$ and $c>0$. In particular, $\SWa[\varSigma_{\mu}]$ is a $O\big(n^{\frac{\wt(\varSigma_{\mu})}{2}}\big)$, and since the $\varSigma_{\mu}$'s form a linear basis of $\obs$, one concludes that for any observable $f$,
$$\SWa[f]=O\left(n^{\frac{\wt(f)}{2}}\right).$$
Now, $\varSigma_{\mu}$ and $R_{\mu+1}$ have the same top homogeneous component, so if $R_{\mu}=R_{\mu_{1}}R_{\mu_{2}}\cdots R_{\mu_{r}}$, then
$$\SWa[R_{\mu}]\simeq c^{|\mu|-2\ell(\mu)}\,n^{\frac{|\mu|}{2}}\qquad;\qquad \SWa[R_{\mu}(\lambda^{*})] \simeq c^{|\mu|-2\ell(\mu)}$$
for any partition $\mu$ without part of size $1$. It implies the convergence in probability $R_{k}(\lambda^{*}) \to c^{k-2}$ for any $k \geq 2$, whence the existence of limit shapes $\Omega_{c}$ with 
\begin{align*}R_{\Omega_{c}}(z)&=R_{c}(z)=\frac{1}{z}\left(1+\sum_{n=2}^{\infty}c^{n-2}z^{n}\right)=\frac{1}{z}+\frac{z}{1-cz}\\
G_{\Omega_{c}}(z)&=G_{c}(z)=\frac{c+z-\sqrt{(z-c)^{2}-4}}{2(1+cz)}=\frac{2}{z+c+\sqrt{(z-c)^{2}-4}}
\end{align*}
where the holomorphic square root is defined on $\C \setminus \R^{-}$ and chosen so that $\sqrt{1}=1$.
\bigskip

A careful analysis shows that these generating functions are indeed those of the continuous diagrams of Theorem \ref{firstasymptotic}, \emph{cf.} \cite[\S3]{Bia01}. In the following, we are rather interested in the computation of the transition measures $\mu_{c}=\mu_{\Omega_{c}}$ and of the observables $\tilh_{n}(\Omega_{c})$ and $\tilp_{n}(\Omega_{c})$. Let us begin with the transition measures:

\comment{\psset{unit=0.5mm}
\pspicture(-50,-73)(170,53)
\rput(-40,5){\textcolor{BurntOrange}{$c=0$}}
\rput(-40,-65){\textcolor{violet}{$c=1$}}
\rput(80,5){\textcolor{red}{$c=1/2$}}
\rput(80,-65){\textcolor{blue}{$c=2$}}
\parametricplot*[linecolor=BurntOrange!50!white]{-2}{2}{t 15 mul
4 t t mul -1 mul add sqrt 15 mul}
\parametricplot[linecolor=BurntOrange]{-2}{2}{t 15 mul
4 t t mul -1 mul add sqrt 15 mul}
\parametricplot*[linecolor=red!50!white]{-1.5}{2.5}{t 15 mul 120 add
4 t -0.5 add t -0.5 add mul -1 mul add sqrt 15 mul 1 0.5 t mul add div}
\parametricplot[linecolor=red]{-1.5}{2.5}{t 15 mul 120 add
4 t -0.5 add t -0.5 add mul -1 mul add sqrt 15 mul 1 0.5 t mul add div}
\pscustom[linecolor=violet!50!white,fillstyle=solid,fillcolor=violet!50!white]{
\parametricplot{-0.80}{3}{t 15 mul
4 t -1 add t -1 add mul -1 mul add sqrt 15 mul 1 t  add div -70 add}
\psline(45,-70)(-15,-70)}
\psframe*[linecolor=violet!50!white](-15,-70)(-12,-5)
\parametricplot[linecolor=violet]{-0.80}{3}{t 15 mul
4 t -1 add t -1 add mul -1 mul add sqrt 15 mul 1 t  add div -70 add}
\psline[linecolor=violet](-15,-70)(-15,-5)
\parametricplot*[linecolor=blue!50!white]{0}{4}{t 15 mul 120 add
4 t -2 add t -2 add mul -1 mul add sqrt 15 mul 1 t 2 mul add div -70 add}
\parametricplot[linecolor=blue]{0}{4}{t 15 mul 120 add
4 t -2 add t -2 add mul -1 mul add sqrt 15 mul 1 t 2 mul add div -70 add}
\psline[linecolor=blue,linewidth=2pt](112.5,-70)(112.5,-5)
\psline{->}(-50,0)(50,0)
\psline{->}(-50,-70)(50,-70)
\psline{->}(70,0)(170,0)
\psline{->}(70,-70)(200,-70)
\psline{->}(0,0)(0,50)
\psline{->}(0,-70)(0,-20)
\psline{->}(120,0)(120,50)
\psline{->}(120,-70)(120,-20)
\endpspicture
}
\figcap{\includegraphics{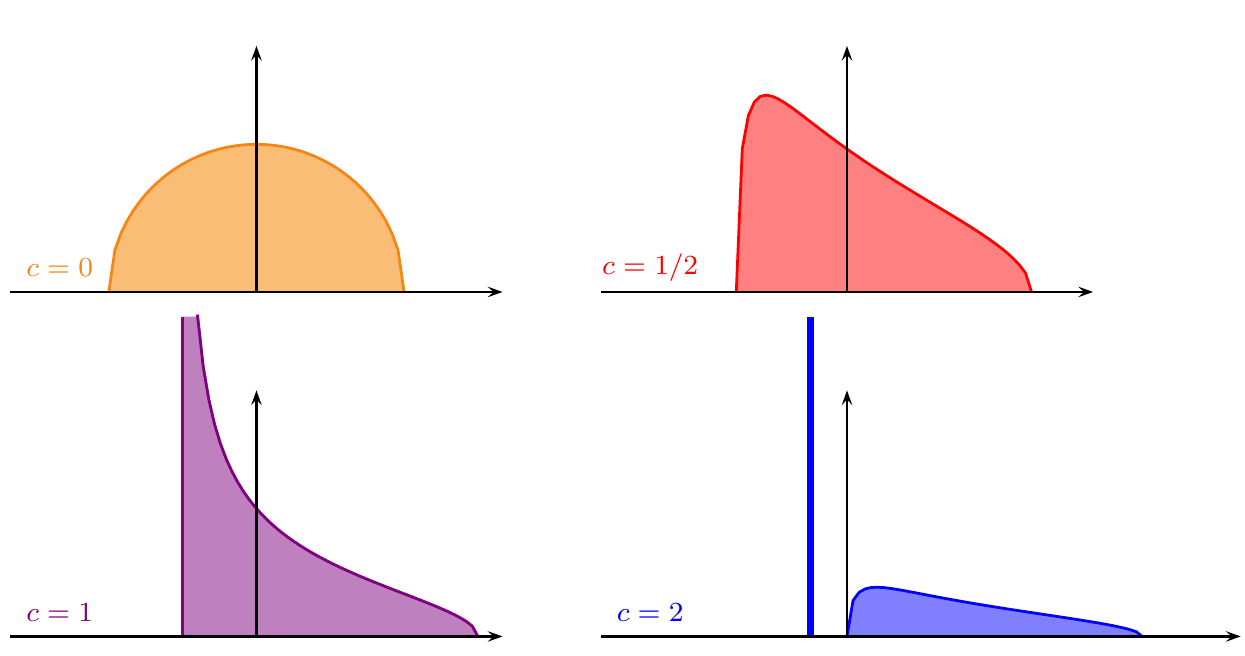}}{Deformations of the Wigner semicircle law corresponding to Schur-Weyl measures with parameters $\alpha=1/2$, $c>0$.}

\begin{proposition}[Limit shapes under Schur-Weyl measures and Mar\v{c}enko-Pastur distributions]\label{deformwigner}
The transition measure $\mu_{c}$ is up to an homothetic transformation the Mar\v{c}enko-Pastur distribution of parameter $c$:
\begin{align*}d\mu_{c\leq 1}(s)&=\mathbb{1}_{s \in [c-2,c+2]}\,\frac{\sqrt{4-(s-c)^{2}}}{2\pi(1+cs)}\,ds\\
d\mu_{c> 1}(s)&=\mathbb{1}_{s \in [c-2,c+2]}\,\frac{\sqrt{4-(s-c)^{2}}}{2\pi(1+cs)}\,ds+\left(1-\frac{1}{c^{2}}\right)\delta_{-\frac{1}{c}}(s)\end{align*}
Notice that one recovers the Wigner semicircle law when $c=0$.
\end{proposition}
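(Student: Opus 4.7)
The plan is to recover $\mu_c$ from its Stieltjes transform $G_c$ (just computed) by applying the Stieltjes inversion formula. Before doing so, I would fix the branch of $\sqrt{(z-c)^2-4}$: writing $(z-c)^2-4=(z-c-2)(z-c+2)$ and combining the principal branches of the two linear factors, one obtains a holomorphic function on $\C\setminus[c-2,c+2]$, asymptotic to $z-c$ at infinity. With this branch $G_c(z)=\frac{2}{z+c+\sqrt{(z-c)^2-4}}$ is holomorphic on the complement of the support of $\mu_c$ and satisfies $G_c(z)\sim 1/z$; note that on $(-\infty,c-2)$ this branch gives the \emph{negative} square root, so it differs from the principal branch on $\C\setminus\R^-$ used in the preceding paragraph precisely there, and this distinction will turn out to be crucial.

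For the absolutely continuous part of $\mu_c$, I would apply Stieltjes inversion on $s\in(c-2,c+2)$: $\frac{d\mu_c}{ds}(s)=-\frac{1}{\pi}\lim_{\eps\to 0^+}\mathrm{Im}\,G_c(s+\I\eps)$. From the factorisation one reads $\sqrt{(s+\I 0^+-c)^2-4}=\I\sqrt{4-(s-c)^2}$ uniformly on $(c-2,c+2)$, and rationalising $\frac{2}{s+c+\I\sqrt{4-(s-c)^2}}$ by its conjugate --- together with the telescoping identity $(s+c)^2+(4-(s-c)^2)=4(1+cs)$ --- immediately yields the claimed density $\frac{\sqrt{4-(s-c)^2}}{2\pi(1+cs)}$.

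To locate possible atoms I would look for real poles of $G_c$ outside $[c-2,c+2]$; solving $z+c+\sqrt{(z-c)^2-4}=0$ by squaring produces the unique candidate $z=-1/c$, which lies strictly below $c-2$ for every $c>0$ with $c\neq 1$. With the natural branch one has $\sqrt{(-1/c-c)^2-4}=-|1/c-c|$, and direct substitution into $z+c+\sqrt{(z-c)^2-4}$ shows the equation holds if and only if $c>1$. A first-order expansion of $G_c$ around $z=-1/c$ then produces a simple pole with residue $1-1/c^2$, which is the claimed atom mass; total mass one is automatic from $G_c(z)\sim 1/z$ at infinity. The only genuine delicacy in the argument is the branch-tracking described above, which is precisely what produces the dichotomy at $c=1$ in the statement of the proposition.
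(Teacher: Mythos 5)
Your proof is correct and follows essentially the same route as the paper: Perron--Stieltjes inversion on $(c-2,c+2)$ for the absolutely continuous density, and the residue of $G_{c}$ at $z=-1/c$ for the atom when $c>1$. Your explicit fixing of the branch (cut on $[c-2,c+2]$, with $\sqrt{(z-c)^{2}-4}\sim z-c$ at infinity, hence the \emph{negative} square root on $(-\infty,c-2)$) is a genuine refinement rather than a deviation: it is exactly what makes the dichotomy at $c=1$ and the sign of the residue $+\left(1-\frac{1}{c^{2}}\right)$ transparent, a point the paper glosses over --- its stated principal branch on $\C\setminus\R^{-}$ would not produce the pole for $c>1$, and its displayed residue even comes out as $-\left(1-\frac{1}{c^{2}}\right)$.
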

\noindent These Mar\v{c}enko-Pastur distributions are in fact well-known, since they play the same role for Wishart ensembles as the Wigner semicircle law for the GUE, see \cite{MP67}.\medskip

\begin{proof}
One uses the Perron-Stieltjes inversion formula:
$$\frac{d\mu_{c}(s)}{ds}=\lim_{\eps \to 0^{+}}\frac{G_{c}(s-\I\eps)-G_{c}(s+\I\eps)}{2\I\pi}$$
which comes essentially from the calculus of residue. This formula holds for any real number $s$ such that $G_{c}$ stays bounded in the vicinity of $s$. Suppose first that $c< 1$. Then, the singularity of $G_{c}(z)$ at $z=-1/c$ is removable, so the Perron-Stieltjes formula holds for any $s$. If $s \in\, ]c-2,c+2[$, then $(s-c)^{2}-4$ is a negative real number, so
$$\lim_{\eps \to 0^{\pm}}\sqrt{(s+\I\eps)^{2}-4}=\pm \I\sqrt{4-(s-c)^{2}}.$$
The other terms in $G_{c}(z)$ are continuous at $z=s$, so 
$$\frac{d\mu_{c}(s \in \,]c-2,c+2[)}{ds}=\frac{(\I\sqrt{4-(s-c)^{2}}) - (-\I\sqrt{4-(s-c)^{2}})}{4\I\pi(1+cs)}=\frac{\sqrt{4-(s-c)^{2}}}{2\pi(1+cs)}.$$
If $|s-c|\geq 2$, then $(s-c)^{2}-4$ is a non-negative real number, so the square root is continuous in a vicinity of $s$. Consequently, 
$$\frac{d\mu_{c}(s \notin \,]c-2,c+2[)}{ds}=0,$$
whence the formula for $\mu_{c<1}$. Now, if $c\geq 1$, then the Perron-Stieltjes formula do not hold anymore at $s=-1/c$, and one has to add a Dirac to take account of the singularity of $G_{c}(z)$ at this point. Since the residue of $G_{c}(z)$ at $z=-1/c$ is 
$$\frac{-1/c+c-\sqrt{(c+1/c)^{2}-4}}{2c}=\frac{-(c-1/c)-\sqrt{(c-1/c)^{2}}}{2c}=-\left(1-\frac{1}{c^{2}}\right),$$
one obtains indeed the formula given by Proposition \ref{deformwigner}.
\end{proof}\bigskip

Now, let us compute the observables $\tilh_{n}(\Omega_{c})$ and $\tilp_{n}(\Omega_{c})$. Of course, it amounts to expand in power series the functions 
$$H_{c}(z)=z^{-1}G_{c}(z^{-1})=\frac{2}{1+cz+\sqrt{(1-cz)^{2}-4z^{2}}}$$
and $P_{c}(z)=zH_{c}'(z)/H_{c}(z)$, but the calculations are not at all trivial, and we shall need in particular the following hypergeometric identity:
\begin{lemma}\label{whatever}
For any non-negative integer $m$ and $k$,
$$\sum_{l=0}^{k}\sum_{u=0}^{m} \frac{(2k+2+u-2l)\,\,m+2l+2-u!\,2k-2l+u!}{(m-u+l+2)(m-u+l+1)\,m-u!\,u!\,l!\,l+1!\,k-l!\,k-l+1!}=  \frac{m+2k+4}{m+k+2}\,\times\,\frac{m+2k+2!}{m!\, k!\,k+2!}.$$
\end{lemma}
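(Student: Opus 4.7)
My approach would start from the Beta-integral identity
\[
\frac{1}{(m-u+l+2)(m-u+l+1)}=\int_0^1 t^{m-u+l}(1-t)\,dt,
\]
which absorbs the quadratic factor in the denominator into an integral. Writing $t^{m-u+l}=t^l\cdot t^{m-u}$ partially separates the two summation indices $u$ and $l$ inside the integrand, so that after swapping sum and integral the $u$-sum and the $l$-sum can be treated in turn.

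Concretely, I would change variables $r=m-u$ and recognize the inner $r$-sum as a weighted convolution of the sequences $\binom{r+2l+2}{2l+2}$ and $\binom{m-r+2k-2l}{2k-2l}$ against $t^r$, with the linear weight $(2k+2+u-2l)=q+2$ where $q=m-r+2k-2l$. That linear weight I would dispose of via the identity
\[
(q+2)\binom{q}{s}=(s+1)\binom{q+1}{s+1}+\binom{q}{s},
\]
so that the $r$-sum splits into two ordinary Chu--Vandermonde convolutions, each of which collapses to a single binomial coefficient. The remaining $l$-sum is then a combination of two ordinary hypergeometric sums whose Catalan-type denominator $l!(l+1)!(k-l)!(k-l+1)!$ forces telescoping against the binomial coefficients coming from the Vandermonde step; a final evaluation of the residual Beta integral produces the factor $(m+2k+2)!/(m+k+2)!$ on the right, while the linear prefactor $(m+2k+4)/(m+k+2)$ arises from combining the two pieces produced by the split of $q+2$.

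The hard part will be the bookkeeping: carrying the linear weight $(2k+2+u-2l)$ through the generating-function manipulations is delicate, and making the exact constants $(k+2)!$ and $m+2k+4$ fall out in the right places requires some care. As a fallback, the identity is accessible by the Wilf--Zeilberger method: one summarizes the inner series over $u$ in closed form via Saalsch\"utz's balanced $_3F_2$ summation (the three factorials in the denominator make a balanced $_3F_2$ plausible), and then applies Zeilberger's creative telescoping to the remaining single sum over $l$, which reduces the claim to checking a first-order recurrence and an initial value.
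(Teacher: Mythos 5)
Your fallback is, in essence, the paper's entire proof: the paper simply appeals to the Wilf--Zeilberger theory, letting an algorithm produce recurrences in $m$ and $k$ satisfied by both sides, checking the identity on enough initial values, and leaving the computation of the recurrences to a computer algebra system. So the creative-telescoping half of your proposal is sound and coincides with the intended argument. The one caveat there is your hope that the inner $u$-sum is summable by Saalsch\"utz: because of the linear factor $(2k+2+u-2l)$ and the quadratic denominator $(m-u+l+2)(m-u+l+1)$, the $u$-series is of ${}_4F_3$ type rather than an obviously balanced ${}_3F_2$, so a closed form for the inner sum is not guaranteed; this does not matter, however, since Zeilberger-style telescoping (in its multivariate form) applies to the double sum directly, which is exactly what the paper does.

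Your primary, hand-made route has a concrete gap. After inserting $\frac{1}{(m-u+l+2)(m-u+l+1)}=\int_0^1 t^{\,m-u+l}(1-t)\,dt$, the $r$-sum (with $r=m-u$) carries the weight $t^{r}$. The splitting $(q+2)\binom{q}{s}=(s+1)\binom{q+1}{s+1}+\binom{q}{s}$ is fine, but the resulting sums are of the form $\sum_{r=0}^{m} t^{r}\binom{r+a}{a}\binom{m-r+b}{b}$, and such weighted convolutions do not collapse to a single binomial coefficient: the Chu--Vandermonde collapse $\sum_{r=0}^{m}\binom{r+a}{a}\binom{m-r+b}{b}=\binom{m+a+b+1}{a+b+1}$ is available only at $t=1$, i.e.\ precisely when the Beta integral you introduced to absorb the quadratic denominator has been thrown away; you cannot have both. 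Consequently the next step --- ``the remaining $l$-sum \dots forces telescoping'' --- is a hope rather than an argument, and the constants $(m+2k+4)/(m+k+2)$ and $(k+2)!$ are never actually produced. As written, the manual route does not constitute a proof; what makes your proposal acceptable is the WZ fallback, which is as detailed as (indeed identical in spirit to) the published argument.
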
\medskip

\begin{proof}
For such (multivariate) hypergeometric identities, one can use the theory of Wilf-Zeilberger, see \cite{PWZ97}, \cite{WZ92a} and \cite{WZ92b}. More precisely, there is an algorithm that provides recurrence relations satisfied by both sides of the identity, and on the other hand, it is easy to show that the identity holds for certain values of $m$ and $k$. Consequently, the identity is indeed true for all $m$ and $k$; we leave the details of the computation of the recurrence relations to any computer algebra system.
\end{proof}\bigskip

\begin{proposition}[Observables of the limit shapes]\label{obslimitshape}
The $n$-th moment of the law $\mu_{c}$ is
$$\tilh_{n}(\Omega_{c})=\mu_{c}(s^{n})=\sum_{k=1}^{\lfloor \frac{n}{2}\rfloor} \frac{n^{\downarrow 2k}}{(n-k+1)(n-k)\,k!\,k-1!}\,c^{n-2k},$$
and the $n$-th moment of interlacing coordinates is
$$\tilp_{n}(\Omega_{c})=\sum_{k=1}^{\lfloor \frac{n}{2}\rfloor} \frac{n^{\downarrow 2k}}{(n-k)\,k!\,k-1!}\,c^{n-2k}.$$
\end{proposition}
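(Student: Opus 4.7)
The plan is to apply the Lagrange--B\"urmann inversion formula, which avoids having to expand the explicit square-root form of $H_c$ and bypasses Lemma \ref{whatever} altogether. Starting from $R_c(G_c(z)) = z$, i.e.\ $1/u + u/(1-cu) = z$ with $u = G_c(z)$, multiplying through by $u(1-cu)$ yields the quadratic relation $1 - cu + u^2 = zu(1-cu)$. Setting $w = 1/z$ and $F(w) := w\,H_c(w) = G_c(1/w)$, this rearranges into the implicit equation
$$w = \frac{F(1-cF)}{1-cF+F^2},\qquad\text{i.e.,}\qquad F = w\,\phi(F),\quad \phi(F) = \frac{1-cF+F^2}{1-cF} = 1+\frac{F^2}{1-cF}.$$

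Lagrange--B\"urmann immediately gives both observables in closed form. For $\tilh_n$, since $F(w) = w + \sum_{n\geq 2}\tilh_n\,w^{n+1}$, one reads
$$\tilh_n \,=\, [w^{n+1}]\,F(w) \,=\, \frac{1}{n+1}\,[F^n]\,\phi(F)^{n+1}.$$
For $\tilp_n$, I would observe that $H_c(w) = \phi(F(w))$, so $\log H_c(w) = \log\phi(F(w))$ vanishes at $w=0$ and $\tilp_n = n\,[w^n]\log H_c$. Applying Lagrange--B\"urmann to $g(F) = \log\phi(F)$ and then using the identity $\phi^{n-1}\phi' = \tfrac{1}{n}(\phi^n)'$ collapses the formula to the remarkably clean
$$\tilp_n \,=\, [F^n]\,\phi(F)^n.$$

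Both right-hand sides are now evaluated by a single binomial expansion. Writing
$$\phi(F)^N = \sum_{j=0}^{N}\binom{N}{j}\frac{F^{2j}}{(1-cF)^j} = \sum_{j,i\geq 0}\binom{N}{j}\binom{i+j-1}{j-1}c^i\,F^{2j+i},$$
the coefficient of $F^n$ equals $\sum_{k=1}^{\lfloor n/2\rfloor}\binom{N}{k}\binom{n-k-1}{k-1}c^{n-2k}$. Taking $N=n+1$ and dividing by $n+1$ produces the formula for $\tilh_n$; taking $N=n$ produces the formula for $\tilp_n$. Rewriting $\binom{n+1}{k}/(n+1) = \binom{n}{k}/(n+1-k)$ and simplifying $(n-k-1)!/(n-k+1)! = 1/[(n-k+1)(n-k)]$ (respectively $(n-k-1)!/(n-k)! = 1/(n-k)$) packages the result into the exact form announced in the statement, with the falling factorial $n^{\downarrow 2k}$ emerging from $n!/(n-2k)!$.

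There is no genuine obstacle along this route — the whole argument is a textbook application of Lagrange--B\"urmann once the right rational substitution $F = wH_c(w)$ has been identified. The only subtle point is choosing the logarithmic variant of the inversion to handle $\tilp_n$, which avoids double sums entirely. Had one instead expanded $H_c(w) = 2/(1 + cw + \sqrt{(1-cw)^2 - 4w^2})$ by treating the square root through a generalised binomial series, a double hypergeometric sum would appear that must be collapsed to the announced single sum; this is precisely the role played by Lemma \ref{whatever} in the author's approach.
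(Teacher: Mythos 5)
Your proposal is correct, and it takes a genuinely different route from the paper. The paper works directly with the explicit closed form of $H_{c}$: it expands $2/H_{c}(z)$ via the Catalan generating series, computes the convolution coefficients $f(m,r)$ by induction, and reduces $\tilh_{n}(\Omega_{c})$ with a Vandermonde-type binomial identity; then, for $\tilp_{n}(\Omega_{c})$, it multiplies out $P_{c}(z)=zH_{c}'(z)/H_{c}(z)$ as a product of two series, which produces a double hypergeometric sum that can only be collapsed by the WZ-certified identity of Lemma \ref{whatever}. You instead exploit the relation $R_{c}(G_{c}(z))=z$ already available from \S\ref{limitshape}, recast it as the functional equation $F=w\,\phi(F)$ with $F(w)=wH_{c}(w)$ and $\phi(F)=1+\frac{F^{2}}{1-cF}$, and read off $\tilh_{n}=\frac{1}{n+1}[F^{n}]\phi(F)^{n+1}$ and $\tilp_{n}=[F^{n}]\phi(F)^{n}$ from the ordinary and logarithmic forms of Lagrange--B\"urmann inversion; a single binomial expansion of $\phi^{N}$ then yields both stated formulas, and the elementary simplifications $\frac{1}{n+1}\binom{n+1}{k}\binom{n-k-1}{k-1}=\frac{n^{\downarrow 2k}}{(n-k+1)(n-k)\,k!\,k-1!}$ and $\binom{n}{k}\binom{n-k-1}{k-1}=\frac{n^{\downarrow 2k}}{(n-k)\,k!\,k-1!}$ check out. (The only cosmetic blemish is the $j=0$ term in your double sum, where $\binom{i-1}{-1}$ should be read as contributing only to the constant term; this does not affect $n\geq 1$.) What each approach buys: yours is shorter, bypasses Lemma \ref{whatever} and the Catalan manipulations entirely, and is conceptually transparent --- it is the standard free-probability dictionary between the $R$-transform data $R_{n}(\Omega_{c})=c^{n-2}$ and the moments, so the simplicity of the free cumulants does all the work; the paper's computation is a self-contained brute-force expansion of the explicit square-root formula, at the price of delegating a hypergeometric identity to computer algebra.
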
 
\noindent When $c=0$, one recovers the fact that the even moments of the semicircle law are the Catalan numbers: $\tilh_{2n}(\Omega_{0})=C_{n}$.
\begin{proof}
The expansion of $2/H_{c}(z)$ gives
\begin{align*}
\frac{2}{H_{c}(z)}&=1+cz+(1-cz)\sqrt{1-4\left(\frac{z}{1-cz}\right)^{2}}\\
&=1+cz+(1-cz)\left(1-2\sum_{m=0}^{\infty}C_{m}\left(\frac{z}{1-cz}\right)^{2(m+1)}\right)\\
&=2\left(1-\sum_{m=0}^{\infty}C_{m}\,\frac{z^{2m+2}}{(1-cz)^{2m+1}}\right)
\end{align*}
because $\sum_{m=0}^{\infty}C_{m}\,x^{m}=\frac{1-\sqrt{1-4x}}{2x}$. Then,
$$H_{c}(z)=1+\sum_{m=0}^{\infty}\sum_{r=1}^{\infty}\left\{\sum_{m_{1}+m_{2}+\cdots+m_{r}=m}\!\!\!\!\! C_{m_{1}}C_{m_{2}}\cdots C_{m_{r}}\right\}\frac{z^{2m+2r}}{(1-cz)^{2m+r}}.
$$
Let us denote by $f(m,r)$ the sum between brackets. One has $f(m,0)=0$, $f(m,1)=C_{m}$ and $f(m,r)=f(m+1,r-1)-f(m+1,r-2)$ because of the recurrence relation satisfied by Catalan numbers. Consequently, one can show by induction that
$$f(m,r)=r\,\frac{2m+r-1!}{m!\,m+r!}.$$
Finally, one can use the expansion $\frac{1}{(1-cz)^{2m+r}}=\sum_{l=0}^{\infty} \binom{2m+r+l-1}{l}\,c^{l}z^{l}$ to obtain:
\begin{align*}H_{c}(z)&=1+\sum_{m=0}^{\infty}\sum_{r=1}^{\infty}\sum_{l=0}^{\infty} r\frac{2m+r+l-1!}{l!\,2m+r-1!}\,\frac{2m+r-1!}{m!\,m+r!}\,c^{l}\,z^{2m+2r+l}\\
&=1+\sum_{n=2}^{\infty}z^{n}\left\{\sum_{k=1}^{\lfloor \frac{n}{2}\rfloor }\left(\sum_{r=1}^{k}r\, \frac{(n-r-1)^{\downarrow 2k-r-1}}{k!\,k-r!}\right)c^{n-2k}\right\}
\end{align*}
For any positive integers $a$, $b$ and $c$, $\binom{a+1}{b+c+1}=\sum_{j\geq 1} \binom{j}{b}\binom{a-j}{c}$ --- this identity can be obtained combinatorially by grouping the parts of size $b+c+1$ of $\lle 1,a+1\rre$ according to the value of their $(b+1)$-th element. In particular,
$$\sum_{j\geq 1}\,j\binom{n-j-1}{n-k-1}=\binom{n}{n-k+1}$$
if one takes $a=n-1$, $b=1$ and $c=n-k-1$. As a consequence, the term between parentheses in the previous expression is just $\frac{n^{\downarrow 2k}}{(n-k+1)(n-k)\,k!\,k-1!}$, whence the formula for the $\tilh_{n}(\Omega_{c})$'s.\bigskip

As for the $\tilp_{n}(\Omega_{c})$'s, one can use Newton relations between the power sums and the complete homogeneous functions in the algebra of symmetric functions, or expand in power serie the function $P_{c}(z)$ --- it leads to the same computations. First, one sees that:
\begin{align*}P_{c}(z)&=-\frac{z}{1+cz+\sqrt{(1-cz)^{2}-4z^{2}}}\left(c - \frac{c(1-cz)+4z}{\sqrt{(1-cz)^{2}-4z^{2}}}\right)\\
&=\frac{z\,H_{c}(z)}{2}\left(\frac{c(1-cz)+4z}{\sqrt{(1-cz)^{2}-4z^{2}}}-c\right)\end{align*}
\begin{align*}
&=\frac{z\,H_{c}(z)}{2}\left(c\sum_{n=1}^{\infty}\binom{2n}{n}\left(\frac{z}{1-cz}\right)^{2n}+4\sum_{n=0}^{\infty}\binom{2n}{n}\left(\frac{z}{1-cz}\right)^{2n+1}\right)\\
&=\frac{z\,H_{c}(z)}{2}\left(\sum_{n=0}^{\infty}\sum_{r=0}^{\infty}\left\{cz\binom{2n+1+r}{r}\binom{2n+2}{n+1}+ 4\binom{2n+r}{r}\binom{2n}{n}\right\}c^{r}z^{2n+1+r}\right)
\end{align*}
If $n_{1}\geq 2$, then the coefficient of $z^{n_{1}}$ in $zH_{c}(z)$ is $\tilh_{n-1}(\Omega_{c})$, that is to say:
$$ \sum_{k=1}^{\lfloor \frac{n_{1}-1}{2}\rfloor} \frac{n_{1}-1!}{(n_{1}-k)(n_{1}-k-1)\,k!\,k-1!\,n_{1}-1-2k!}\,c^{n_{1}-2k-1}$$
On the other hand, if $n_{2} \geq 1$, then the coefficient of $z^{n_{2}}$ in the second term $T_{c}(z)$ of the product $P_{c}(z)$ is: 
$$\sum_{l=0}^{\lfloor \frac{n_{2}}{2}\rfloor-1}\!\!\!\!\frac{n_{2}-1!}{n_{2}-2l-2!\,l!\,l+1!}\,c^{n_{2}-2l-1}+\,\,2\!\!\sum_{l=0}^{\lfloor\frac{n_{2}-1}{2} \rfloor}\!\!\!\!\frac{n_{2}-1!}{n_{2}-2l-1!\,l!\,l!}\,c^{n_{2}-2l-1}=\frac{1}{n_{2}}\!\!\sum_{l=0}^{\lfloor\frac{n_{2}-1}{2}\rfloor}\!\!\!\! \frac{n_{2}+1!}{n_{2}-2l-1!\,l!\,l+1!}\,c^{n_{2}-2l-1}$$
One also has to take account of the coefficient of $z$ in $zH_{c}(z)$, that is equal to $1$. So, for $n \geq 3$, the coefficient of $z^{n}$ in $P_{c}(z)$ is equal to the sum of the following expressions:
\begin{align*}
A&=\frac{1}{n-1}\sum_{k=1}^{\lfloor\frac{n}{2} \rfloor} \frac{n!}{n-2k!\,k!\,k-1!}\,c^{n-2k}\\
B&=\!\!\!\!\!\!\sum_{\substack{n_{1}+n_{2}=n\\n_{1}\geq 2,\,\,n_{2}\geq 1\\2\leq 2k_{1}\leq n_{1}-1\\ 0 \leq 2k_{2} \leq n_{2}-1}}\frac{(n_{2}+1)\,n_{1}-1!\,n_{2}-1!\,n_{1}-k_{1}-2!}{n_{1}-k_{1}!\,k_{1}!\,k_{1}-1!\, n_{1}-2k_{1}-1!\,n_{2}-2k_{2}-1!\,k_{2}!\,k_{2}+1!}\,c^{n-2k_{1}-2k_{2}-2}\\
&=\!\!\!\!\!\!\sum_{\substack{n_{1}+n_{2}=n-3\\n_{1}\geq 0,\,\,n_{2}\geq 0\\0\leq 2k_{1}\leq n_{1}-1\\ 0 \leq 2k_{2} \leq n_{2}}}\frac{(n_{2}+2)\,n_{1}+1!\,n_{2}!\,n_{1}-k_{1}-1!}{n_{1}-k_{1}+1!\,k_{1}!\,k_{1}+1!\, n_{1}-2k_{1}-1!\,n_{2}-2k_{2}!\,k_{2}!\,k_{2}+1!}\,c^{n-2k_{1}-2k_{2}-4}\\
&=\!\!\!\!\!\!\sum_{\substack{0\leq u \leq n-4-2k\\0\leq k = k_{1}+k_{2}}} \frac{f(k_{1},k_{2},u,n)\,n-2k_{2}-2-u!\,2k_{2}+u!}{n-u-2k-4!\,u!\,k_{1}!\,k_{1}+1!\,k_{2}!\,k_{2}+1!}\,c^{n-2k-4}
\end{align*}
where $f(k_{1},k_{2},u,n)=\frac{2k_{2}+2+u}{(n-u-k_{1}-2k_{2}-2)(n-u-k_{1}-2k_{2}-3)}$. Consequently, $\tilp_{n}(\Omega_{c})$ is indeed a polynomial in $c$ with all terms of even degree or all terms of odd degree. The term of degree $n-2$ comes exclusively from $A$, and is 
$$\frac{1}{n-1}\,\frac{n!}{n-2!}=n=\frac{n^{\downarrow 2} }{n-1\,1!\,0!},$$
so the formula of Proposition \ref{obslimitshape} is true for the coefficient of $c^{n-2}$. For the other coefficients, one has to show that if $k\geq 0$ and $n-2k-4 \geq 0$, then
$$\sum_{\substack{0\leq u \leq n-4-2k\\ k = k_{1}+k_{2}}} \frac{f(k_{1},k_{2},u,n)\,n-2k_{2}-2-u!\,2k_{2}+u!}{n-u-2k-4!\,u!\,k_{1}!\,k_{1}+1!\,k_{2}!\,k_{2}+1!}=\frac{n^{\downarrow 2k+4}}{(n-k-2)(n-1)\,k+2!\,k!}.$$
Indeed, the left-hand side comes from $B$, and the right-hand side is the difference of the coefficient in the formula of Proposition \ref{obslimitshape} and of the coefficient coming from $A$. Up to a change of index, this last identity is exactly Lemma \ref{whatever}. Finally, it is easy to verify that the formula for the moments $\tilp_{n}(\Omega_{c})$ remains true for $n=1,2$.
\end{proof}
\bigskip

\noindent These calculations are of course unessential, but we shall need the precise expression of the moments $\tilp_{n}(\Omega_{c})$ in our study of the fluctuations $\Delta_{\lambda,c}(s)=\lambda^{*}(s)-\Omega_{c}(s)$.
\bigskip

\section{Gaussian concentration of measures on partitions}\label{sniady}
To establish the gaussian concentration of probability measures on partitions coming from reducible representations of symmetric groups, P. \'Sniady has developed in \cite{Sni06} a theory of cumulants of observables that works in a very general setting, and in particular for Schur-Weyl measures. If $X_{1},\ldots,X_{r}$ are real or complex random variables, we recall that the \textbf{joint cumulant} of $X_{1},\ldots,X_{r}$ is
$$k(X_{1},\ldots,X_{r})=\left.\frac{\partial^{r}}{\partial t_{1}\cdots \partial t_{r}}\right|_{t_{1}=t_{2}=\cdots=t_{r}=0}\!\!\!\log \esper[\exp(t_{1}X_{1}+\cdots+ t_{r}X_{r})]\,.$$
In particular, gaussian vectors are characterized by the vanishing of joint cumulants of order $r\geq 3$, and the first and second joint cumulants give the expectation of the gaussian vector and its covariance matrix. That said, if $a_{1},\ldots,a_{r}$ commute in the group algebra $\C\sym_{n}$, and if $\proba$ is a probability measure on $\Part_{n}$, then one can consider the $a_{i}$'s as commutative random variables by setting
$$a_{i}=a_{i}(\lambda)=\chi^{\lambda}(a_{i})$$
with $\lambda$ picked randomly according to $\proba$; as a consequence, the joint cumulant $k(a_{1},\ldots,a_{r})$ makes sense.  On the other hand, we have seen that observables of diagrams may also be considered as random variables, and besides, any observable can be interpreted as an element of the center of the group algebra $\C\sym_{n}$, see \cite[\S2.1]{Sni06}; so, the joint cumulant of observables of diagrams also makes sense. Then, the major result of \cite{Sni06} is the following:
\begin{theorem}[Sniady's theory of cumulants of observables]\label{sniadytheo}
Let $(\proba_{n})_{n \in \N}$ be a family a probability measures on the sets $\Part_{n}$ of integer partitions, and let us denote by $(\esper_{n})_{n \in \N}$ and $(k_{n})_{n \in \N}$ the corresponding expectations and joint cumulants. The following assertions are equivalent:
\begin{enumerate}
\item For all positive integers $l_{1},\ldots,l_{r}$, 
$$k_{n}(\varSigma_{l_{1}},\ldots,\varSigma_{l_{r}})\,n^{-\frac{l_{1}+\cdots+l_{r}-r+2}{2}}=O(1).$$
\item For all integers $l_{1},\ldots,l_{r} \geq 2$,
$$k_{n}(R_{l_{1}},\ldots,R_{l_{r}})\,n^{-\frac{l_{1}+\cdots+l_{r}-2r+2}{2}}=O(1).$$
\item If $\sigma_{l_{1}}, \ldots,\sigma_{l_{r}}$ are disjoint cycles of respective lengths $l_{1},\ldots,l_{r}$, then $$k_{n}(\sigma_{l_{1}},\ldots,\sigma_{l_{r}})\,n^{\frac{l_{1}+\cdots+l_{r}+r-2}{2}}=O(1).$$
\end{enumerate}
Moreover, if these assertions hold, then the following limits are equal (assuming that they exist):
\begin{align*}c_{l+1}&=\lim_{n\to \infty} \esper_{n}[\varSigma_{l}]\,n^{-\frac{l+1}{2}}=\lim_{n\to \infty} \esper_{n}[R_{l+1}]\,n^{-\frac{l+1}{2}}=\lim_{n \to \infty} \esper_{n}[\sigma_{l}]\,n^{\frac{l-1}{2}}\\
v_{l+1,m+1}&=\lim_{n \to \infty}k_{n}(\varSigma_{l},\varSigma_{m})\,n^{-\frac{l+m}{2}}=\lim_{n \to \infty}k_{n}(R_{l+1},R_{m+1})\,n^{-\frac{l+m}{2}}\\
&=\lim_{n \to \infty}k_{n}(\sigma_{l},\sigma_{m})\,n^{\frac{l+m}{2}}-lm\,c_{l+1}\,c_{m+1}+\sum_{\substack{l=a_{1}+\cdots+a_{r}\\ m=b_{1}+\cdots + b_{r}}} \frac{lm}{r}\,c_{a_{1}+b_{1}}\cdots c_{a_{r}+b_{r}}
\end{align*} 
In this setting, the random processes
$$\left(n^{-\frac{l}{2}}\,\big(R_{l+1}-\esper_{n}[R_{l+1}]\big)\right)_{l\geq 1}\quad\text{and}\quad\left(n^{-\frac{l}{2}}\,\big({\varSigma_{l}}-\esper_{n}[\varSigma_{l}]\big)\right)_{l\geq 1}$$
converge in finite-dimensional laws towards a gaussian process whose covariance matrix is $(v_{l+1,m+1})_{l,m \geq 1}$.
\end{theorem}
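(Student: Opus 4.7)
The proof decomposes into three parts: the equivalence of conditions $(1)$, $(2)$ and $(3)$; the matching of the limit values $c_{l+1}$ and $v_{l+1,m+1}$ across the three bases; and the deduction of Gaussian convergence by the method of cumulants. The equivalence $(1) \Leftrightarrow (3)$ is essentially a scaling argument: a cycle $\sigma_l$ of length $l$ yields through the normalized character the random variable $\chi^{\lambda}(l,1^{n-l})$ on $\Part_n$, and the defining identity $\varSigma_l(\lambda) = n^{\downarrow l}\,\chi^{\lambda}(l,1^{n-l})$ combined with multilinearity of joint cumulants gives $k_n(\varSigma_{l_1},\ldots,\varSigma_{l_r}) = \bigl(\prod_i n^{\downarrow l_i}\bigr) k_n(\sigma_{l_1},\ldots,\sigma_{l_r})$, after which the scaling exponents match since $n^{\downarrow l} \sim n^l$. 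The equivalence $(1) \Leftrightarrow (2)$ rests on the triangular change of basis between $(\varSigma_l)_{l \geq 1}$ and $(R_l)_{l \geq 2}$ encoded by the Kerov polynomials, namely $\varSigma_l = R_{l+1} + P_l(R_2,\ldots,R_l)$ with $\wt(P_l) \leq l$. Using multilinearity together with the Leonov--Shiryaev formula expressing a joint cumulant of products as a sum, indexed by set partitions, of products of joint cumulants of the factors, one translates $k_n(\varSigma_{l_1},\ldots,\varSigma_{l_r})$ into a combination of cumulants of monomials in the $R_k$'s; the weight grading ensures that $k_n(R_{l_1+1},\ldots,R_{l_r+1})$ is the top-weight contribution and that lower-weight corrections are absorbed in the rescaling.

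For the matching of limits, the equality of $c_{l+1}$ read as $\lim_n \esper_n[\varSigma_l]\,n^{-(l+1)/2}$ and as $\lim_n \esper_n[R_{l+1}]\,n^{-(l+1)/2}$ is immediate because $\esper_n[\varSigma_l - R_{l+1}]$ is the expectation of an observable of weight $\leq l$, hence $O(n^{l/2}) = o(n^{(l+1)/2})$ under hypothesis $(1)$; and the relation with $\sigma_l$ reduces to the trivial scaling $\esper_n[\sigma_l] = \esper_n[\varSigma_l]/n^{\downarrow l}$. The covariance identity is more delicate: the product $\sigma_l \cdot \sigma_m$ in $\C\sym_n$ is not the class sum of cycle type $(l,m)$, since the two cycles may share $r$ common points, producing contributions of cycle types $(a_1+b_1,\ldots,a_r+b_r)$ with $l=a_1+\cdots+a_r$ and $m=b_1+\cdots+b_r$. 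Expanding $k_n(\sigma_l,\sigma_m) = \esper_n[\sigma_l\sigma_m] - \esper_n[\sigma_l]\esper_n[\sigma_m]$ in the basis of $\varSigma_\mu$'s and using the first-order asymptotic $\esper_n[\varSigma_\mu] \sim \bigl(\prod_i c_{\mu_i+1}\bigr) n^{(|\mu|+\ell(\mu))/2}$ then produces precisely the announced correction $-lm\,c_{l+1}c_{m+1} + \sum \frac{lm}{r}\,c_{a_1+b_1}\cdots c_{a_r+b_r}$.

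The Gaussian convergence is then a standard application of the method of cumulants: by $(2)$ and multilinearity, the $r$-th joint cumulant of the rescaled observables $n^{-l_i/2}(R_{l_i+1} - \esper_n[R_{l_i+1}])$ equals $n^{-\sum l_i/2}\,k_n(R_{l_1+1},\ldots,R_{l_r+1}) = O(n^{1-r/2})$, which vanishes for $r \geq 3$ and remains bounded for $r = 2$. Since a centered family is Gaussian exactly when all its joint cumulants of order $\geq 3$ vanish, this yields convergence in finite-dimensional laws to the announced Gaussian process, and the corresponding statement for $(\varSigma_l)_{l \geq 1}$ follows from the decomposition $\varSigma_l = R_{l+1} + (\text{lower weight})$. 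The main obstacle lies in the combinatorial accounting of the covariance correction: one has to enumerate carefully the overlap patterns of two cycles in $\sym_n$, translate them into a relation in the center of $\C\sym_n$, and match the output with the formal cumulant expansion so as to reproduce exactly the term $-lm\,c_{l+1}c_{m+1} + \sum \frac{lm}{r}\,c_{a_1+b_1}\cdots c_{a_r+b_r}$; all remaining steps reduce to multilinearity of cumulants and to the weight-graded structure of $\obs$.
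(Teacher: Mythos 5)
First, a remark on the comparison itself: the paper offers no proof of this statement --- it is imported verbatim from \'Sniady \cite{Sni06} --- so your sketch can only be measured against \'Sniady's actual argument. The decisive gap is in your treatment of condition (3) and of the covariance identity. The identity you rely on, $k_{n}(\varSigma_{l_{1}},\ldots,\varSigma_{l_{r}})=\bigl(\prod_{i}n^{\downarrow l_{i}}\bigr)\,k_{n}(\sigma_{l_{1}},\ldots,\sigma_{l_{r}})$, is false for $r\geq 2$, because the two sides involve two different products. The cumulants of the cycles are taken with respect to the state $a\mapsto\esper_{n}[\chi^{\lambda}(a)]$ on $\C\sym_{n}$, the products of the $\sigma_{l_{i}}$ being computed in the group algebra (so $\sigma_{l}\sigma_{m}$ is one permutation, of cycle type $(l,m)$), whereas the cumulants of the $\varSigma_{l}$'s are classical cumulants of scalar random variables. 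The normalized character is multiplicative on the centre of $\C\sym_{n}$, but an individual cycle is not central, and $\chi^{\lambda}(\sigma_{l}\sigma_{m})\neq\chi^{\lambda}(\sigma_{l})\,\chi^{\lambda}(\sigma_{m})$ in general; the discrepancy between these two notions of cumulant is precisely what condition (3) and the third expression for $v_{l+1,m+1}$ record. If your identity were true, the two rescaled limits would coincide and the corrections $-lm\,c_{l+1}c_{m+1}+\sum\frac{lm}{r}\,c_{a_{1}+b_{1}}\cdots c_{a_{r}+b_{r}}$ could not appear; it also contradicts the application made in \S\ref{sniady}, where $k_{n}(\sigma_{l},\sigma_{m})=0$ under Schur--Weyl measures while $v_{l+1,m+1}\neq 0$.

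Relatedly, your explanation of the covariance correction through ``two cycles sharing $r$ common points'' is misplaced: the cycles $\sigma_{l},\sigma_{m}$ of the statement are disjoint by hypothesis. The overlap combinatorics occurs elsewhere, namely when one compares the pointwise product of observables $\varSigma_{l}\cdot\varSigma_{m}$ with the single symbol $\varSigma_{l,m}(\lambda)=n^{\downarrow(l+m)}\,\chi^{\lambda}(\sigma_{l}\sigma_{m})$: in the Ivanov--Kerov algebra of partial permutations one has $\varSigma_{l}\cdot\varSigma_{m}=\varSigma_{l,m}+\text{(overlap terms)}$, and these overlap terms, together with the elementary estimate $n^{\downarrow(l+m)}-n^{\downarrow l}\,n^{\downarrow m}\simeq -lm\,n^{l+m-1}$, are what produce the two correction terms after rescaling. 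This comparison of the ordinary product with the disjoint product --- \'Sniady's approximate factorization of characters --- is the heart of the equivalence $(1)\Leftrightarrow(3)$ and of the covariance identity, and it is missing from your sketch. The remaining parts are essentially sound: $(1)\Leftrightarrow(2)$ via the Kerov polynomials and the weight filtration (with Leonov--Shiryaev to control cumulants of products), the identification of $c_{l+1}$ (a statement about expectations only, where the rescaling $\chi^{\lambda}(\sigma_{l})=\varSigma_{l}(\lambda)/n^{\downarrow l}$ is legitimate), and the method of cumulants giving the $O(n^{1-r/2})$ bound on $r$-th cumulants and hence the Gaussian limit.
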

\bigskip

Under a Schur-Weyl measure $\SWa$ with parameters $\alpha=1/2$ and $c>0$, the expectation of a permutation of cycle type $\mu$ is $\SWa[\sigma_{\mu}]=N^{\ell(\mu)-|\mu|}\simeq c^{|\mu|-\ell(\mu)}\,n^{\frac{\ell(\mu)-|\mu|}{2}}$, and this expression is multiplicative with respect to the lengths of the cycles of $\sigma_{\mu}$. Consequently, given disjoint cycles of lengths $l_{1},\ldots,l_{r}$, one has:
$$k_{n}(\sigma_{l_{1}},\ldots,\sigma_{l_{r}})=\begin{cases}N^{1-l_{1}}&\text{if }r=1,\\ 0&\text{otherwise}.\end{cases}$$
From this, one deduces that Schur-Weyl measures with parameters $\alpha=1/2$ and $c>0$ satisfy the hypotheses of Theorem \ref{sniadytheo}, and the limit of the random process
$$\left(X_{l}=\frac{\varSigma_{l}}{n^{\frac{l}{2}}}-c^{l-1}n^{\frac{1}{2}}\right)_{l \geq 2}$$
is a centered gaussian process with covariance matrix
$$v_{l+1,m+1}=0-lm\,c^{l+m-2}+\sum_{r\geq 1}\sum_{\substack{l=a_{1}+\cdots+a_{r}\\ m=b_{1}+\cdots + b_{r}}} \frac{lm}{r} \,c^{l+m-2r}=\sum_{r\geq 2} \binom{l}{r}\binom{m}{r}\,r\,c^{l+m-2r},$$
see \cite[Example 6]{Sni06}. In particular, when $c=0$, the random variables $\sqrt{n}\,\chi^{\lambda}(\sigma_{l})$ converge in finite-dimensional laws towards independent normal variables of variance $l$; this is a form of Kerov's central limit theorem. As we shall see in \S\ref{ksw}, the extension of Kerov's theorem to the case of Schur-Weyl measures stems from a very natural idea; namely, one wanted to produce \emph{independent} gaussian variables from the fluctuations ${\varSigma_{l}}\,{n^{-\frac{l}{2}}}-c^{l-1}n^{\frac{1}{2}}$ when $c$ is not equal to $0$ and the fluctuations are not asymptotically independent. The obvious way to do this was to perform a Gram-Schmidt orthogonalisation in the gaussian space of the limiting process, and we discovered that the orthogonalized basis is related to translated versions of the Chebyshev polynomials of the second kind.
\bigskip
\bigskip

\section{Moments of the deviations and Chebyshev polynomials of the second kind}\label{core}
To make appear the Chebyshev polynomials of the second type, we start by computing the moments of the deviation $\Delta_{\lambda,c}$ of a rescaled Young diagram from its limit shape. If $\lambda$ is a partition of size $n$, let us denote by $\tilq_{k,c}(\lambda)$ the following quantity:
$$\widetilde{q}_{k,c}(\lambda)=\frac{\widetilde{p}_{k+1}(\lambda)}{(k+1)\,n^{k/2}}-\sum_{l=1}^{\lfloor\frac{k+1}{2}\rfloor} \frac{k^{\downarrow 2l-1}}{(k+1-l)\,l!\,l-1!}\,c^{k+1-2l}\,n^{1/2}$$
Because of the factors $n^{-k/2}$ and $n^{1/2}$, $\widetilde{q}_{k,c}$ is not contained in $\obs$, but is rather in the localized ring $\obs^{+}=\obs[{\varSigma_{1}}^{-1/2}]$. In \cite{IO02}, all the computations are done in this ring of generalized observables, and using the so-called Kerov degree; as we shall see here, this complicated framework is not at all required for the asymptotic analysis of fluctuations of Plancherel ($c=0$) and Schur-Weyl ($c>0$) measures.

\begin{lemma}\label{moment}
Let $\lambda$ be a partition of size $n$, and let us define the deviation $\Delta_{\lambda,c}$ by 
$\Delta_{\lambda,c}(s)=\lambda^{*}(s)-\Omega_{c}(s)$. Then, for any $k\geq 1$,
$$\frac{\sqrt{n}}{2}\int_{\R}s^{k}\,\Delta_{\lambda,c}(s)\,ds=\frac{\tilq_{k+1,c}(\lambda)}{k+1}.$$
\end{lemma}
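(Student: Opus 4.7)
My plan is to rewrite both sides in terms of the observables $\tilp_{k+2}$, for which the scaling behaviour and the explicit value on $\Omega_c$ are known. Introduce the ``upper densities''
$$\sigma_{\lambda^{*}}(s)=\frac{\lambda^{*}(s)-|s|}{2},\qquad \sigma_{c}(s)=\frac{\Omega_{c}(s)-|s|}{2},$$
both of which are Lipschitz and compactly supported, so that $\Delta_{\lambda,c}(s)=2(\sigma_{\lambda^{*}}(s)-\sigma_{c}(s))$. The integration-by-parts formula recalled in \S\ref{obs} gives, for any continuous Young diagram $\omega$ with density $\sigma_{\omega}$ and any $n\geq 2$,
$$\tilp_{n}(\omega)=\int_{\R}\sigma_{\omega}''(s)\,s^{n}\,ds=n(n-1)\int_{\R}\sigma_{\omega}(s)\,s^{n-2}\,ds,$$
the boundary terms vanishing because $\sigma_{\omega}$ is compactly supported. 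Applying this with $n=k+2$ to both $\lambda^{*}$ and $\Omega_{c}$ and subtracting, one gets
$$\int_{\R}s^{k}\,\Delta_{\lambda,c}(s)\,ds = \frac{2}{(k+1)(k+2)}\Bigl(\tilp_{k+2}(\lambda^{*})-\tilp_{k+2}(\Omega_{c})\Bigr).$$

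The second step is to invoke the scaling property of the homogeneous observable $\tilp_{k+2}$, which has weight $k+2$: $\tilp_{k+2}(\lambda^{*})=n^{-(k+2)/2}\,\tilp_{k+2}(\lambda)$. Multiplying the previous display by $\sqrt{n}/2$, this yields
$$\frac{\sqrt{n}}{2}\int_{\R}s^{k}\,\Delta_{\lambda,c}(s)\,ds = \frac{1}{(k+1)(k+2)}\left(\frac{\tilp_{k+2}(\lambda)}{n^{(k+1)/2}} - n^{1/2}\,\tilp_{k+2}(\Omega_{c})\right).$$

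The last step is to identify the right-hand side with $\tilq_{k+1,c}(\lambda)/(k+1)$. Substitute the explicit expression of Proposition \ref{obslimitshape},
$$\tilp_{k+2}(\Omega_{c})=\sum_{l=1}^{\lfloor (k+2)/2\rfloor}\frac{(k+2)^{\downarrow 2l}}{(k+2-l)\,l!\,(l-1)!}\,c^{k+2-2l},$$
and use the elementary identity $(k+2)^{\downarrow 2l}/(k+2)=(k+1)^{\downarrow 2l-1}$ to rewrite $\tilp_{k+2}(\Omega_{c})/(k+2)$ as precisely the correction sum that appears in the definition of $\tilq_{k+1,c}$. Multiplying through by $k+1$ then matches both terms term by term with the two pieces of $\tilq_{k+1,c}(\lambda)$, giving the claimed equality.

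No analytic difficulty is involved, since everything is compactly supported and the integrations by parts are legitimate. The only thing to be careful about is the bookkeeping of indices: the integrand $s^{k}$ corresponds to the observable $\tilp_{k+2}$ (not $\tilp_{k+1}$), and one must keep track of the factors $(k+1)$, $(k+2)$ and the powers of $n^{1/2}$, the key combinatorial identity being $(k+2)^{\downarrow 2l}/(k+2)=(k+1)^{\downarrow 2l-1}$, which is what makes the formula of Proposition \ref{obslimitshape} align with the definition of $\tilq_{k+1,c}$.
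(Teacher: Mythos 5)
Your proof is correct and follows essentially the same route as the paper's: integrate by parts to express $\int_{\R}s^{k}\,\Delta_{\lambda,c}(s)\,ds$ through $\tilp_{k+2}(\lambda^{*})-\tilp_{k+2}(\Omega_{c})$, use the weight-$(k+2)$ homogeneity to pass from $\lambda$ to $\lambda^{*}$, and identify the result with $\tilq_{k+1,c}(\lambda)/(k+1)$ via Proposition \ref{obslimitshape} together with $(k+2)^{\downarrow 2l}/(k+2)=(k+1)^{\downarrow 2l-1}$. Incidentally, you state the integration-by-parts relation in its correct form $\tilp_{m}(\omega)=m(m-1)\int_{\R}\sigma_{\omega}(s)\,s^{m-2}\,ds$, which is what the paper's proof actually uses despite the misprinted display in \S\ref{obs}.
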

\medskip

\begin{proof}
Notice that this lemma is the exact analogue of \cite[Proposition 7.2]{IO02}. Besides, the proof is exactly the same. Hence, by using the second part of Proposition \ref{obslimitshape}, one gets:
$$\widetilde{q}_{k,c}(\lambda)=\frac{\sqrt{n}}{k+1}\left(\frac{\tilp_{k+1}(\lambda)}{n^{\frac{k+1}{2}}}-\tilp_{k+1}(\Omega_{c})\right)=\sqrt{n}\left(\frac{{\tilp_{k+1}(\lambda^{*})}-\tilp_{k+1}(\Omega_{c})}{k+1}\right)$$
Then, $\Delta_{\lambda,c}(s)=\lambda^{*}(s)-\Omega_{c}(s)=(\lambda^{*}(s)-|s|)-(\Omega_{c}(s)-|s|)$, so:
\begin{align*}
\frac{\sqrt{n}}{2}\int_{\R}s^{k}\,\Delta_{\lambda,c}(s)\,ds&=\sqrt{n} \left(\int_{\R}s^{k}\,\sigma_{\lambda^{*}}(s)\,ds-\int_{\R}s^{k}\,\sigma_{\Omega_{c}}(s)\,ds\right)\\
&=\frac{\sqrt{n}}{(k+1)(k+2)}\left(\int_{\R}s^{k+2}\,\sigma_{\lambda^{*}}''(s)\,ds-\int_{\R}s^{k+2}\,\sigma_{\Omega_{c}}''(s)\,ds\right)\\
&=\frac{\sqrt{n}}{(k+1)(k+2)}\big(\tilp_{k+2}(\lambda^{*})-\tilp_{k+2}(\Omega_{c})\big)=\frac{\tilq_{k+1,c}(\lambda)}{k+1}
\end{align*}
\end{proof}\bigskip

\begin{proposition}[Asymptotic behaviour of linear functionals of the deviation]\label{linearfunctional}
Let $p(s)$ be a polynomial in $s$. One chooses $\lambda\in \Part_{n}$ randomly according to the Schur-Weyl measure with parameters $\alpha=1/2$ and $c>0$. Then, 
$$\sqrt{n}\scal{p(s)}{\Delta_{\lambda,c}(s)}=\sqrt{n}\int_{\R}p(s)\,\Delta_{\lambda,c}(s)\,ds$$ converges in law towards a centered (gaussian) random variable.
\end{proposition}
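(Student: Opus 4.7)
The plan is to reduce the statement, via Lemma \ref{moment}, to a finite-dimensional gaussian convergence result for the family $(\tilq_{k,c})_{k\geq 1}$, and to derive the latter from Sniady's Theorem \ref{sniadytheo} after re-expressing the $\tilp_{k+1}$'s in the basis of central characters. Writing $p(s) = \sum_k a_k\,s^k$, Lemma \ref{moment} and linearity give
$$\sqrt{n}\int_{\R} p(s)\,\Delta_{\lambda,c}(s)\,ds = \sum_{k\geq 0} \frac{2a_k}{k+1}\,\tilq_{k+1,c}(\lambda),$$
and since any linear combination of the coordinates of a centered gaussian vector is itself centered gaussian, it suffices to show that for arbitrary $k_1, \ldots, k_r$ the vector $(\tilq_{k_1,c}(\lambda), \ldots, \tilq_{k_r,c}(\lambda))$ converges in law to a centered gaussian vector.

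For this, I would use the identity $\tilq_{k,c}(\lambda) = \frac{\sqrt{n}}{k+1}\bigl(\tilp_{k+1}(\lambda^{*}) - \tilp_{k+1}(\Omega_{c})\bigr)$ established inside the proof of Lemma \ref{moment}, and expand $\tilp_{k+1}(\lambda)$ using Proposition \ref{changeofbasis} as
$$\tilp_{k+1}(\lambda) = \!\!\sum_{|\mu|+\ell(\mu)=k+1}\!\! \frac{(k+1)^{\downarrow \ell(\mu)}}{\prod_{i} m_{i}!}\,\varSigma_{\mu}(\lambda)\; +\; R_{k}(\lambda),$$
where $R_{k}$ is a polynomial in the $\varSigma_{j}$'s of weight at most $k$ (after using the factorization property to convert each $\prod_i(\varSigma_i)^{m_i}$ into $\varSigma_\mu$ plus lower-weight terms). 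Theorem \ref{sniadytheo}, whose hypotheses for Schur-Weyl measures were verified at the end of \S\ref{sniady}, then gives the joint convergence in finite-dimensional law of
$$Y_{l} := n^{-l/2}\bigl(\varSigma_{l}(\lambda) - \esper[\varSigma_{l}(\lambda)]\bigr), \qquad l\geq 1,$$
toward a centered gaussian process $(\xi_l)$, while $\esper[\varSigma_{l}]/n^{(l+1)/2} \to c^{l-1}$. Substituting $\varSigma_{l} = \esper[\varSigma_{l}] + n^{l/2}\,Y_{l}$ into each $\varSigma_{\mu}$, multiplying out and dividing by $n^{(k+1)/2}$, the leading deterministic contribution reassembles, by Proposition \ref{obslimitshape}, exactly into $\tilp_{k+1}(\Omega_{c})$ and therefore cancels the constant subtracted in $\tilq_{k,c}$; the terms that are linear in the $Y_{l}$'s survive the subsequent multiplication by $\sqrt{n}$ and provide the centered gaussian limit, while the higher-order-in-$Y$ terms together with the contribution of $R_{k}$ should be negligible.

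The main obstacle is precisely to control these "lower order" terms at the $\sqrt{n}$-scale. Unlike the Plancherel setting of \cite{IO02}, where Kerov's degree provides a tighter filtration and automatically kills lower-order contributions after the $\sqrt{n}$-rescaling, here only the weight filtration is available, so each lower-weight monomial of $R_{k}$ is a priori smaller by a factor of only $n^{1/2}$, which is exactly the scale we amplify. One therefore needs an estimate sharper than the naive $\SWa[f] = O(n^{\wt(f)/2})$: the mean of $R_{k}/n^{k/2}$, and likewise the quadratic-in-$Y$ contributions from the top-weight sum, must produce at most $o(1)$ after multiplication by $\sqrt{n}$. This is where the "nasty trick" of Lemma \ref{nasty} and the Chebyshev combinatorics announced in \S\ref{core} will enter, packaging the required cancellations into an effective bound that is strictly stronger than what the weight filtration alone gives.
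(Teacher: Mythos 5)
There is a genuine gap, and it sits exactly where you located it, but your proposed fix does not close it. In your expansion of $\tilp_{k+1}$ in the $\varSigma$-basis via Proposition \ref{changeofbasis}, the top-weight sum indeed reassembles (after substituting $\varSigma_{i}/n^{(i+1)/2}=c^{i-1}+O_{P}(n^{-1/2})$) into $\tilp_{k+1}(\Omega_{c})$ up to a linear-in-fluctuations gaussian part and negligible quadratic terms; the problem is the lower-weight remainder $D_{k}$ (your $R_{k}$). At the $\sqrt{n}$ scale its contribution is $D_{k}(\lambda)/n^{k/2}$, which by the weight filtration converges in probability to a \emph{constant} $L_{k+1}(c)$ that has no a priori reason to vanish, so your argument only yields convergence to a gaussian variable shifted by an unknown constant --- it does not prove centredness, which is the whole content of the proposition (as the paper itself stresses). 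Deferring this to Lemma \ref{nasty} is moreover circular: Lemma \ref{nasty} deliberately leaves the constants $L_{l}(c)$ undetermined, and in the paper the identity $\sum_{2\leq l\leq k+2}\frac{k!}{l!\,k+2-l!}(-c)^{k+2-l}L_{l}(c)=0$ is \emph{deduced from} Proposition \ref{linearfunctional}, not the other way around. So the cancellation you need cannot be imported from \S\ref{core} without assuming the statement you are proving.

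The paper's proof avoids the $\varSigma$-expansion entirely and works in the free cumulant basis, where no remainder ever appears. By Theorem \ref{sniadytheo}, the fluctuations $\sqrt{n}\,(R_{l}(\lambda^{*})-R_{l}(\Omega_{c}))$, $l\geq 2$, converge jointly to a centred gaussian process. For a product $R_{\mu}=R_{\mu_{1}}\cdots R_{\mu_{r}}$ one writes the \emph{exact} telescoping identity
$$\sqrt{n}\left(R_{\mu}(\lambda^{*})-R_{\mu}(\Omega_{c})\right)=\sum_{i=0}^{r-1}R_{\mu_{1}\ldots\mu_{r-i-1}}(\lambda^{*})\left[\sqrt{n}\,(R_{\mu_{r-i}}(\lambda^{*})-R_{\mu_{r-i}}(\Omega_{c}))\right]R_{\mu_{r-i+1}\ldots\mu_{r}}(\Omega_{c}),$$
in which the outer factors converge in probability to constants by Theorem \ref{firstasymptotic} and the bracketed factors converge jointly to centred gaussians; hence $\Delta_{n,c}(R_{\mu})$ converges to an element of a centred gaussian space, and the same holds jointly for any finite family. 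Since the $R_{\mu}$'s are a \emph{linear} basis of $\obs$, this extends by linearity to $\Delta_{n,c}(f)$ for every observable $f$, in particular $f=\tilp_{k+2}$, and Lemma \ref{moment} converts this into the statement about $\frac{\sqrt{n}}{2}\scal{s^{k}}{\Delta_{\lambda,c}(s)}$. The point is that this decomposition is an algebraic identity, not an asymptotic expansion, so centredness comes for free and no estimate sharper than the weight bound is ever required; if you want to salvage your route, you would instead have to prove directly that every $L_{l}(c)$ vanishes, which is precisely the computation the paper is organised to avoid.
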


\begin{proof}
The important part of Proposition \ref{linearfunctional} is actually the fact that the limit random variable exists and is centered, see the proof of Theorem \ref{almosttheorem} hereafter. Because of Theorem \ref{sniadytheo}, for any $l\geq 2$, 
$$\sqrt{n}\left(\frac{R_{l}(\lambda)}{n^{\frac{l}{2}}}-c^{l-2}\right)=\sqrt{n}\left(R_{l}(\lambda^{*})-R_{l}(\Omega_{c})\right)$$
converges towards a centered gaussian variable, and the result is also true when $l=1$ (in this case the variables are null for all $n$). Now, if $\mu=(\mu_{1},\ldots,\mu_{r})$ is a partition, then
$$\sqrt{n}\left(R_{\mu}(\lambda^{*})-R_{\mu}(\Omega_{c})\right)=\sum_{i=0}^{r-1}R_{\mu_{1}\ldots\mu_{r-i-1}}(\lambda^{*})\,\left[\sqrt{n}\,(R_{\mu_{r-i}}(\lambda^{*})-R_{\mu_{r-i}}(\Omega_{c}))\right]\,R_{\mu_{r-i+1}\ldots\mu_{r}}(\Omega_{c}).$$
Because of Theorem \ref{firstasymptotic}, each $R_{\mu_{1}\ldots\mu_{r-i-1}}(\lambda^{*})$ converges towards the constant variable $R_{\mu_{1}\ldots\mu_{r-i-1}}(\Omega_{c})$, and on the other hand, each difference $\left[\sqrt{n}\,(R_{\mu_{r-i}}(\lambda^{*})-R_{\mu_{r-i}}(\Omega_{c}))\right]$ converges towards a centered gaussian variable. Moreover, for these differences of free cumulants $\sqrt{n}\,(R_{l}(\lambda^{*})-R_{l}(\Omega_{c}))$, one has in fact joint convergence in finite-dimensional laws towards a centered gaussian process. Consequently, the limit of $\sqrt{n}(R_{\mu}(\lambda^{*})-R_{\mu}(\Omega_{c}))$ is an element of a gaussian space whose variables are all centered:
$$\forall \mu,\,\,\,\Delta_{n,c} (R_{\mu})=\sqrt{n}\,(R_{\mu}(\lambda^{*})-R_{\mu}(\Omega_{c})) \quad\longrightarrow \quad\mathcal{N}(0,(\sigma_{\mu,c})^{2})$$
In fact, the same argument as before shows that given a vector $(\Delta_{n,c}(R_{\mu^{(1)}}), \ldots, \Delta_{n,c}(R_{\mu^{(s)}}))$ of such fluctuations, one has a joint convergence towards a gaussian vector. Since the $R_{\mu}$'s form a linear basis of $\obs$, the same result holds for any vector $(\Delta_{n,c}(f^{(1)}),\ldots,\Delta_{n,c}(f^{(s)}))$ of scaled fluctuations of observables of diagrams: it converges in law towards a centered gaussian vector. As Lemma \ref{moment} ensures that
$$\frac{\sqrt{n}}{2}\scal{s^{k}}{\Delta_{\lambda,c}(s)}=\frac{\Delta_{n,c}(\tilp_{k+2})}{(k+1)(k+2)},$$
 our claim is proved.
\end{proof}\bigskip

\begin{lemma}\label{transmoment}
With the same notations as in Lemma \ref{moment},
\begin{align*}&\frac{\sqrt{n}}{2}\int_{\R}(s-c)^{k}\,\Delta_{\lambda,c}(s)\,ds=\\
&\frac{\sqrt{n}}{(k+1)(k+2)}\left(\sum_{2 \leq l \leq k+2}\!\! \binom{k+2}{l} (-c)^{k+2-l}\,\frac{\widetilde{p}_{l}(\lambda)}{n^{l/2}}-\sum_{2 \leq 2m \leq k+2}\!\! \binom{k+2}{m} (-c)^{k+2-2m}\right)
\end{align*}
\end{lemma}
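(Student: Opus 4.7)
My plan is to combine Lemma \ref{moment} with a binomial expansion, and then reduce the "constant" term to the stated form via the explicit moments $\widetilde{p}_L(\Omega_c)$ computed in Proposition \ref{obslimitshape}.

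First, I would expand $(s-c)^k = \sum_{l=0}^{k} \binom{k}{l}(-c)^{k-l}\,s^{l}$ and integrate against $\Delta_{\lambda,c}$. The $l=0$ term gives $(-c)^k \int_{\R} \Delta_{\lambda,c}(s)\,ds$, which vanishes because both $\lambda^*$ and $\Omega_{c}$ are continuous Young diagrams with the same normalization ($\int_{\R}(\omega(s)-|s|)\,ds = 2$ for each). For $l\geq 1$, Lemma \ref{moment} together with the identity $\widetilde{q}_{l+1,c}/(l+1) = \frac{\sqrt{n}}{(l+1)(l+2)}\bigl(\widetilde{p}_{l+2}(\lambda)/n^{(l+2)/2} - \widetilde{p}_{l+2}(\Omega_c)\bigr)$ from the proof of Lemma \ref{moment} gives each summand. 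Using the elementary identity $\binom{k}{l}/[(l+1)(l+2)] = \binom{k+2}{l+2}/[(k+1)(k+2)]$ and reindexing $L = l+2$, I obtain
\[
\frac{\sqrt{n}}{2}\int_{\R}(s-c)^{k}\,\Delta_{\lambda,c}(s)\,ds
= \frac{\sqrt{n}}{(k+1)(k+2)} \sum_{L=3}^{k+2} \binom{k+2}{L}(-c)^{k+2-L}\left(\frac{\widetilde{p}_{L}(\lambda)}{n^{L/2}} - \widetilde{p}_{L}(\Omega_{c})\right).
\]
Since $\widetilde{p}_{2}(\lambda) = 2n$ and $\widetilde{p}_{2}(\Omega_{c}) = 2$, the $L=2$ term of the difference also vanishes, so I may equivalently start the sum at $L=2$.

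It remains to identify the "deterministic" piece. Matching the first sum in the statement directly, I only need to show
\[
\sum_{L=2}^{k+2} \binom{k+2}{L}(-c)^{k+2-L}\,\widetilde{p}_{L}(\Omega_{c}) = \sum_{2\leq 2m\leq k+2} \binom{k+2}{m}(-c)^{k+2-2m}.
\]
Substituting the explicit formula $\widetilde{p}_{L}(\Omega_{c}) = \sum_{m=1}^{\lfloor L/2\rfloor} \frac{L^{\downarrow 2m}}{(L-m)\,m!\,(m-1)!}\,c^{L-2m}$ of Proposition \ref{obslimitshape} and swapping the order of summation, each power $c^{k+2-2m}$ collects a sum that, after setting $N = k+2$ and $j = L - 2m$, reduces to
\[
\sum_{j=0}^{N-2m} \binom{N-2m}{j}\,\frac{(-1)^{j}}{j+m} = \frac{(N-2m)!\,(m-1)!}{(N-m)!}.
\]
This is the classical beta-function identity, obtained from $\int_{0}^{1} x^{m-1}(1-x)^{N-2m}\,dx = B(m,N-2m+1)$. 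Plugging this back in exactly produces the coefficient $\binom{k+2}{m}$ on the right-hand side.

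The only genuinely nontrivial step is this last combinatorial identity; everything else is a direct application of Lemma \ref{moment}, binomial expansion, and rewriting of falling factorials. An alternative, slightly cleaner route would be to bypass Lemma \ref{moment} entirely and integrate by parts twice from the start, using $\int s^{L}\sigma_{\omega}''(s)\,ds = \widetilde{p}_{L}(\omega)$, but the computational heart of the proof — the beta integral — is the same.
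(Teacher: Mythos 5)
Your proof is correct and follows essentially the same route as the paper: binomial expansion of $(s-c)^{k}$, application of Lemma \ref{moment}, and evaluation of the resulting alternating sum by the beta integral $\int_{0}^{1}x^{m-1}(1-x)^{Z}\,dx$. The only cosmetic difference is that you phrase the deterministic term through the explicit moments $\widetilde{p}_{L}(\Omega_{c})$ of Proposition \ref{obslimitshape}, whereas the paper unpacks the definition of $\widetilde{q}_{l,c}$ --- which is the same quantity by that proposition.
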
\medskip
\begin{proof}
Of course, one applies Newton's binomial theorem in order to expand the power $(s-c)^{k}$:
\begin{align*}
\frac{\sqrt{n}}{2}\scal{(s-c)^{k}}{\Delta_{\lambda,c}(s)}&=\sum_{l=0}^{k}\binom{k}{l}(-c)^{k-l}\,\frac{\sqrt{n}}{2}\int_{\R}s^{l}\,\Delta_{\lambda,c}(s)\,ds=\frac{1}{k+1}\sum_{l=0}^{k}\binom{k+1}{l+1}(-c)^{k-l}\,\tilq_{l+1,c}(\lambda)\\
&=\frac{1}{k+1}\sum_{l=1}^{k+1}\binom{k+1}{l}(-c)^{k+1-l}\,\tilq_{l,c}(\lambda)\\
&=\left(\frac{\sqrt{n}}{(k+1)(k+2)}\sum_{l=2}^{k+2}\binom{k+2}{l}(-c)^{k+2-l}\,\frac{\tilp_{l}(\lambda)}{n^{l/2}}\right)-\sqrt{n}\,A(c)
\end{align*}
where $A(c)$ is the following quantity (one just uses the definition of $\tilq_{l,c}(\lambda)$):
\begin{align*}
A(c)&=\sum_{m=1}^{\lfloor \frac{k}{2}\rfloor +1} \sum_{l=2m-1}^{k+1} \frac{k!}{k+1-l!\,l-2m+1!\,m!\,m-1!\, (l+1-m)}\,(-1)^{k+1-l}c^{k+2-2m} \\
&=\sum_{m=0}^{\lfloor \frac{k}{2}\rfloor} \frac{k!}{k-2m!\,m!\,m+1!}\left\{\sum_{u=0}^{k-2m} \binom{k-2m}{u}\,\frac{(-1)^{k-u-2m}}{u+m+1}\right\}c^{k-2m}
\end{align*}
The term between brackets can be computed by writing $\frac{1}{u+m+1}=\int_{0}^{1} x^{u+m}\,dx$. So,
\begin{align*}
\sum_{u=0}^{k-2m} \binom{k-2m}{u}\,\frac{(-1)^{k-u-2m}}{u+m+1}&=\int_{0}^{1} \sum_{u=0}^{k-2m} \binom{k-2m}{u}\,(-1)^{k-u-2m}\,x^{u+m}\,dx\\
&=\int_{0}^{1}x^{m}\,(x-1)^{k-2m}\,dx=(-1)^{k-2m}\,\frac{k-2m!\,m!}{k-m+1!}
\end{align*}
and consequently, $A(c)=\sum_{m=0}^{\lfloor \frac{k}{2}\rfloor} \frac{k!}{k-m+1!\,m+1!}\,(-c)^{k-2m}=\frac{1}{(k+1)(k+2)}\sum_{m=0}^{\lfloor \frac{k}{2}\rfloor} \binom{k+2}{m+1}\,(-c)^{k-2m}$, whence the formula announced.
\end{proof}
\bigskip

In the following, we denote by $U_{k}(s)$ the Chebyshev polynomial of the second kind renormalized so that $U_{k}(2\cos\theta)=\frac{\sin (k+1)\theta}{\sin \theta}$. These polynomials satisfy the relation
$$U_{k+2}(X)=X\,U_{k+1}(X)-U_{k}(X),$$
and the first values are $U_{0}(X)=0$, $U_{1}(X)=X$, $U_{2}(X)=X^{2}-1$, $U_{3}(X)=X^{3}-2X$ and $U_{4}(X)=X^{4}-3X^{2}+1$. The end of this paragraph is devoted to the proof of the following theorem:
\begin{theorem}[Linear functionals of the deviation associated to translated Chebyshev polynomials]\label{almosttheorem}
For any non-negative integer $k$, $\frac{\sqrt{n}}{2}\scal{U_{k}(s-c)}{\Delta_{\lambda}(s)}$ is equal to
$$\frac{1}{k+1}\sum_{l=0}^{k-1}\binom{k+1}{l}(-c)^{l}\,X_{k+1-l}(\lambda),$$
plus some observables that  under Schur-Weyl measures of parameter $1/2$ converge towards $0$ when $n$ goes to infinity.
\end{theorem}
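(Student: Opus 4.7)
The plan is to use Lemma \ref{transmoment} as the starting point. Writing $U_{k}(X)=\sum_{j=0}^{k}u_{k,j}\,X^{j}$ in the monomial basis and applying the lemma to each $(s-c)^{j}$ decomposes $\frac{\sqrt n}{2}\scal{U_{k}(s-c)}{\Delta_{\lambda,c}}$ as a double sum whose random part is a linear combination of the observables $\tilp_{l}(\lambda)/n^{l/2}$ for $2\leq l\leq k+2$, and whose deterministic part is built from the compensators of Lemma \ref{transmoment}.

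The next step is to convert each $\tilp_{l}$ to the basis of central characters via Proposition \ref{changeofbasis}: modulo an observable of strictly smaller weight, $\tilp_{l}$ equals the explicit sum $\sum_{|\mu|+\ell(\mu)=l}\frac{l^{\downarrow\ell(\mu)}}{\prod m_{i}!}\prod_{i}\varSigma_{i}^{m_{i}}$. A direct inspection (as in the examples following Proposition \ref{changeofbasis}) shows that only weights of the same parity as $l$ appear in the full expansion of $\tilp_{l}$, so the largest corrective weight is $l-2$. Combined with the Schur--Weyl bound $\esper[f]=O(n^{\wt(f)/2})$ and the variance control provided by Theorem \ref{sniadytheo}, this forces the corrective observables to contribute $o_{P}(1)$ after the $\sqrt n/n^{l/2}$ rescaling, so that one may restrict attention to the top weight-$l$ component.

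For each product $\prod_{i}\varSigma_{i}^{m_{i}}$ of that top weight, substituting $\varSigma_{i}/n^{(i+1)/2}=X_{i}/\sqrt n+c^{i-1}$ and expanding yields the key identity
\[\sqrt n\,\frac{\prod_{i}\varSigma_{i}^{m_{i}}}{n^{l/2}}=\sqrt n\,c^{l-2\ell(\mu)}+c^{l-2\ell(\mu)}\sum_{i:\,m_{i}\geq 1}\frac{m_{i}}{c^{i-1}}\,X_{i}+o_{P}(1).\]
Reassembling these expansions over all $\mu$'s in the formula of Proposition \ref{changeofbasis}, the deterministic $\sqrt n$-terms sum up to $\sqrt n\,\tilp_{l}(\Omega_{c})$ by Proposition \ref{obslimitshape}, and therefore cancel precisely with the compensators coming from Lemma \ref{transmoment} (this cancellation is in fact automatic, since it amounts to saying that the statement of Lemma \ref{transmoment} evaluates to $0$ when $\Delta_{\lambda,c}$ is replaced by $0$). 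Only the $X$-linear contributions survive.

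The main obstacle is then a combinatorial identity: after the cancellation, the remaining sum must be reorganized into $\frac{1}{k+1}\sum_{l=0}^{k-1}\binom{k+1}{l}(-c)^{l}\,X_{k+1-l}$. This amounts to showing that the coefficient of $X_{m}$, collected over all choices of $j\in\{0,\ldots,k\}$, of $l\in\{2,\ldots,j+2\}$, and of partitions $\mu$ of weight $l$ containing $m$ as a part, equals $\frac{1}{k+1}\binom{k+1}{k+1-m}(-c)^{k+1-m}$ for $m\in\{2,\ldots,k+1\}$ (the coefficient on $X_{1}$ being immaterial since $X_{1}=0$). I would attack this through the generating identity $U_{k}(w+w^{-1})=(w^{k+1}-w^{-k-1})/(w-w^{-1})$: under the conformal substitution $s-c=w+w^{-1}$, the Chebyshev polynomial becomes a Laurent polynomial in a single variable $w$, and the huge sum over $\mu$'s should collapse to a coefficient extraction in a rational function of $w$. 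Should this generating-function route prove intractable, one falls back on a direct hypergeometric summation of the type of Lemma \ref{whatever}; this is presumably the role of the ``nasty trick'' of Lemma \ref{nasty} announced in the introduction.
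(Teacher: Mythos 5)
Your setup is sound and in fact coincides with the paper's own strategy: Lemma \ref{transmoment} plus Proposition \ref{changeofbasis}, followed by the substitution $\varSigma_{i}/n^{(i+1)/2}=X_{i}/\sqrt{n}+c^{i-1}$, is exactly the mechanism of Lemma \ref{nasty}, and your ``key identity'' for $\sqrt{n}\,\prod_{i}\varSigma_{i}^{m_{i}}/n^{l/2}$ is correct (the $X_{i}$ are tight by Theorem \ref{sniadytheo}, so the higher-order terms are indeed $o_{P}(1)$). The cancellation of the deterministic $\sqrt{n}$-terms against the compensators of Lemma \ref{transmoment} is also correct in substance, since the deterministic part of the top-weight expansion is precisely $\tilp_{l}(\Omega_{c})$ by Proposition \ref{obslimitshape}. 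Two caveats, though. First, your dismissal of the lower-weight corrections rests on a parity claim (``only weights of the same parity as $l$ appear in $\tilp_{l}$'') justified only by inspection of examples; this fact is true, but it needs an argument (e.g.\ the involution $\lambda\mapsto\lambda'$ under which $\tilp_{j}\mapsto(-1)^{j}\tilp_{j}$ and $\varSigma_{\mu}\mapsto(-1)^{|\mu|-\ell(\mu)}\varSigma_{\mu}$). The paper does not use parity at all: it allows the corrective term $D_{l-1}/n^{(l-1)/2}$ to converge to an unknown constant $L_{l}(c)$ and then kills the total constant contribution indirectly, by invoking Proposition \ref{linearfunctional} to say the limit must be centered. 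Your route would be cleaner if you proved the parity statement; as written it is an assertion.

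The genuine gap is the last step. After the reduction, the entire content of the theorem is the combinatorial resummation that turns the double sum over $j$, $l$ and partitions $\mu$ into $\frac{1}{k+1}\sum_{l=0}^{k-1}\binom{k+1}{l}(-c)^{l}X_{k+1-l}$, and you do not carry it out: you only propose to ``attack'' it by a generating-function substitution $s-c=w+w^{-1}$, with a fallback to ``a direct hypergeometric summation.'' It is not evident that the sum over $\mu$ collapses under your substitution, because the variables $X_{m}$ are indexed by the parts of $\mu$ and weighted by multinomial factors $1/\prod_{i}m_{i}(\mu)!$, so this is speculation rather than a proof. In the paper this resummation is the bulk of the work: Lemma \ref{idenpart} to collapse the sum over partitions with fixed size and length, a beta-integral evaluation showing the deterministic terms cancel ($A+D=0$), a further hypergeometric identity \`a la Wilf--Zeilberger to simplify the coefficient of each $X_{k+1-x}$, and finally the inversion formula relating the monomial and Chebyshev expansions to pass from the translated moments to $\scal{U_{k}(s-c)}{\Delta_{\lambda,c}(s)}$. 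Until one of your two routes is actually executed at that level of detail, the proposal is a correct reduction plus a plan, not a proof.
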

\begin{examples}
Since the proof is quite tricky, we have found useful to include in our paper the complete computations for $k=0,1,2,3,4$ (one has to go up to $k=4$ to see all the tricks). In particular, this will make Lemma \ref{nasty} much more natural. One will use the relations between the $\tilp$'s and the $\varSigma$'s presented at the end of \S\ref{obs}.
\begin{enumerate}
\item[$k=0$.] Since $\Delta_{\lambda,c}(s)$ is the difference between two normalized continuous Young diagrams, $$\scal{U_{0}(s-c)}{\Delta_{\lambda,c}(s)}=\scal{1}{\Delta_{\lambda,c}(s)}=0,$$
and this agrees with the formula of Theorem \ref{almosttheorem}.\vspace{2mm}
\item[$k=1$.] $\scal{U_{1}(s-c)}{\Delta_{\lambda,c}(s)}=\scal{s-c}{\Delta_{\lambda,c}(s)}=\scal{s}{\Delta_{\lambda,c}(s)}$ can be computed by using Lemma \ref{moment}. Thus,
$$\frac{\sqrt{n}}{2}\scal{U_{1}(s-c)}{\Delta_{\lambda,c}(s)}=\frac{\widetilde{q}_{2}(\lambda)}{2}=\frac{\widetilde{p}_{3}(\lambda)}{6n}-\frac{c\sqrt{n} }{2}=\frac{\sqrt{n}}{2}\left(\frac{\varSigma_{2}(\lambda)}{n^{3/2}}-c\right)=\frac{X_{2}(\lambda)}{2}.$$
\item[$k=2$.] $\scal{U_{2}(s-c)}{\Delta_{\lambda,c}(s)}=\scal{(s-c)^{2}-1}{\Delta_{\lambda,c}(s)}=\scal{(s-c)^{2}}{\Delta_{\lambda,c}(s)}$ is the second translated moment of the deviation, so it can be computed thanks to Lemma \ref{transmoment}. 
\begin{align*}
\frac{\sqrt{n}}{2}\scal{U_{2}(s-c)}{\Delta_{\lambda,c}(s)}&=\frac{\sqrt{n}}{12}\left(\frac{\tilp_{4}(\lambda)}{n^{2}}-4c\frac{\tilp_{3}(\lambda)}{n^{3/2}}+6c^{2}\frac{\tilp_{2}(\lambda)}{n}-6-4c^{2}\right)\\
&=\frac{\sqrt{n}}{12}\left(\frac{4\varSigma_{3}(\lambda)}{n^{2}}+\frac{6\varSigma_{1,1}(\lambda)}{n}+\frac{2}{n}-\frac{12c\varSigma_{2}(\lambda)}{n^{3/2}}+12c^{2}-6-4c^{2}\right)\\
&=\frac{\sqrt{n}}{3}\left(\frac{\varSigma_{3}(\lambda)}{n^{2}}-c^{2}\right)-c\sqrt{n}\left(\frac{\varSigma_{2}(\lambda)}{n^{3/2}}-c\right)-\frac{1}{3n^{1/2}}\\
&=\frac{X_{3}(\lambda)}{3}-cX_{2}(\lambda)-\frac{1}{3\sqrt{n}}
\end{align*}
Here, we've used the relation\footnote{Such a relation can be obtained if one considers the symbols $\varSigma_{\mu}$ as elements of the algebra of partial permutations, see \cite{IK99}. As a consequence, any product of terms $\varSigma_{\mu}$ is a sum over certain partial matchings $M$ of symbols $\varSigma_{\rho(M)}$, see \cite[\S3.3]{FM10} for a precise statement.} $\varSigma_{1,1}=(\varSigma_{1})^{2}-\varSigma_{1}$, and in general we shall use extensively the factorization of symbols $\varSigma_{\mu}$ in top homogeneous component.\vspace{2mm}
\item[$k=3$.] $\scal{U_{3}(s-c)}{\Delta_{\lambda,c}(s)}=\scal{(s-c)^{3}-2(s-c)}{\Delta_{\lambda,c}(s)}$ is the difference of the third translated moment, minus two times the first translated moment; this latter term will bring a contribution of $-X_{2}$. Then, by using again Lemma \ref{transmoment}, one sees that $\frac{\sqrt{n}}{2}\scal{(s-c)^{3}}{\Delta_{\lambda,c}(s)}$ is equal to
\begin{align*}
&\frac{\sqrt{n}}{20}\left(\frac{\tilp_{5}(\lambda)}{n^{5/2}}-5c\frac{\tilp_{4}(\lambda)}{n^{2}}+10c^{2}\frac{\tilp_{3}(\lambda)}{n^{3/2}}-10c^{3}\frac{\tilp_{2}(\lambda)}{n}+10c+5c^{3}\right)\\
&=\frac{\sqrt{n}}{20}\left(\frac{5\varSigma_{4}(\lambda)}{n^{5/2}}+\frac{20\varSigma_{2}(\lambda)}{n^{3/2}}-\frac{25\varSigma_{2}(\lambda)}{n^{5/2}}-\frac{20c\varSigma_{3}(\lambda)}{n^{2}}+\frac{20c}{n}+\frac{30c^{2}\varSigma_{2}(\lambda)}{n^{3/2}}-20c-15c^{3}\right)\\
&=\frac{\sqrt{n}}{4}\left(\frac{\varSigma_{4}(\lambda)}{n^{5/2}}-c^{3}\right)-c\sqrt{n}\left(\frac{\varSigma_{3}(\lambda)}{n^{2}}-c^{2}\right)+\left(\frac{3c^{2}\sqrt{n}}{2}+\sqrt{n}\right)\left(\frac{\varSigma_{2}(\lambda)}{n^{3/2}}-c\right)+\frac{c}{\sqrt{n}}-\frac{5\varSigma_{2}(\lambda)}{4n^{2}}\\
&=\frac{X_{4}(\lambda)}{4}-cX_{3}(\lambda)+\frac{3c^{2}X_{2}(\lambda)}{2}+X_{2}(\lambda)+\frac{c}{\sqrt{n}}-\frac{5\varSigma_{2}(\lambda)}{4n^{2}}.
\end{align*}
Again, we have used the relation $\varSigma_{2,1}=\varSigma_{2}\,\varSigma_{1}-2\varSigma_{2}$. By adding the contribution $-X_{2}(\lambda)$, one recovers the formula of Proposition \ref{almosttheorem}, because $\varSigma_{2}(\lambda)=O(n^{3/2})$ under a Schur-Weyl measure, and as a consequence $-5\varSigma_{2}(\lambda)/4n^{2}=O(n^{-1/2}) \to 0$. In the general case, we shall use the weight filtration on observables to neglict some terms: indeed, for any observable $f$, the order of magnitude is a $O(n^{\wt(f)/2})$, so one can for instance neglict terms of the kind $f/n^{k/2}$ with $k>\wt(f)$. Notice that there are some other simplifications in the calculation of $\scal{(s-c)^{3}}{\Delta_{\lambda,c}(s)}$; essentially, they are due to Lemma \ref{linearfunctional} that ensures that the mean of a scaled fluctuation is asymptotically equal to zero.\vspace{2mm}
\item[$k=4$.] Finally, another kind of trick has to be used in order to compute $\scal{U_{4}(s-c)}{\Delta_{\lambda,c}(s)}$, and this is due to the appearance of a term $\varSigma_{2,2}$ in the expansion of $\tilp_{6}$ in the basis of central characters. Because of Lemma \ref{transmoment},
$\frac{\sqrt{n}}{2}\scal{(s-c)^{4}}{\Delta_{\lambda,c}(s)}$ is equal to
\begin{align*}
&\frac{\sqrt{n}}{30}\left(\frac{\tilp_{6}(\lambda)}{n^{3}} - 6c\frac{\tilp_{5}(\lambda)}{n^{5/2}}+15c^{2}\frac{\tilp_{4}(\lambda)}{n^{2}}-20c^{3}\frac{\tilp_{3}(\lambda)}{n^{3/2}}+15c^{4}\frac{\tilp_{2}(\lambda)}{n}-20-15c^{2}-6c^{4}\right)\\
&=\frac{\sqrt{n}}{30}\left(\frac{6\varSigma_{5}(\lambda)}{n^{3}}+\frac{30\varSigma_{3}(\lambda)}{n^{2}}-\frac{90\varSigma_{3}(\lambda)}{n^{3}}+\frac{15(\varSigma_{2}(\lambda))^{2}}{n^{3}}-\frac{60}{n}+\frac{42}{n^{2}}\right.\\
&\left.-\frac{30c\varSigma_{4}(\lambda)}{n^{5/2}}-\frac{120c\varSigma_{2}(\lambda)}{n^{3/2}}+\frac{150c\varSigma_{2}(\lambda)}{n^{5/2}} +\frac{60c^{2}\varSigma_{3}(\lambda)}{n^{2}}-\frac{60c^{2}}{n}-\frac{60c^{3}\varSigma_{2}(\lambda)}{n^{3/2}}+75c^{2}+24c^{4}\right)
\end{align*}
In order to simplify the calculations, we have used the relations $\varSigma_{1,1,1}=(\varSigma_{1})^{3}-3(\varSigma_{1})^2+2\varSigma_{1}$, $\varSigma_{3,1}=\varSigma_{3}\,\varSigma_{1}-3\varSigma_{3}$ and $\varSigma_{2,2}=(\varSigma_{2})^{2}-4\varSigma_{3}-2(\varSigma_{1})^{2}+2\varSigma_{1}$. Now, the new trick consists in writing:
$$\frac{(\varSigma_{2}(\lambda))^{2}}{n^{3}}=\left(\frac{\varSigma_{2}(\lambda)}{n^{3/2}}\right)^{2}=\left(\frac{\varSigma_{2}(\lambda)}{n^{3/2}}-c\right)^{2}+2c\left(\frac{\varSigma_{2}(\lambda)}{n^{3/2}}-c\right)+c^{2}=\frac{(X_{2}(\lambda))^{2}}{n}+2c\frac{X_{2}(\lambda)}{\sqrt{n}}+c^{2}$$
Multiplying by the factor $\sqrt{n}$, one sees that the term $(X_{2}(\lambda))^{2}/\sqrt{n}$ will disappear in the limit $n \to \infty$, because $X_{2}(\lambda)=O(1)$. So, the previous expression may be written as:
\begin{align*}&\frac{X_{5}(\lambda)}{5}-cX_{4}(\lambda)+\left(2c^{2}+1\right)X_{3}(\lambda)-(3c+2c^{3})X_{2}(\lambda)\\
&+\frac{(X_{2}(\lambda))^{2}}{2\sqrt{n}}+\frac{5c\varSigma_{2}(\lambda)}{n^{2}}-\frac{3\varSigma_{3}(\lambda)}{n^{5/2}}-\frac{2+2c^{2}}{\sqrt{n}}+\frac{7}{5n^{3/2}}
\end{align*}
If one adds $\frac{\sqrt{n}}{2}\scal{-3(s-c)^{2}+1}{\Delta_{\lambda,c}(s)}$ in order to obtain $\frac{\sqrt{n}}{2}\scal{U_{4}(s-c)}{\Delta_{\lambda,c}(s)}$, one finally gets
\begin{align*}&\frac{X_{5}(\lambda)}{5}-cX_{4}(\lambda)+2c^{2}X_{3}(\lambda)-2c^{3}X_{2}(\lambda)\\
&+\frac{(X_{2}(\lambda))^{2}}{2\sqrt{n}}+\frac{5c\varSigma_{2}(\lambda)}{n^{2}}-\frac{3\varSigma_{3}(\lambda)}{n^{5/2}}-\frac{1+2c^{2}}{\sqrt{n}}+\frac{7}{5n^{3/2}}
\end{align*}
and every term on the second line can be neglicted when $n$ goes to infinity; on the other hand, the first line is precisely the formula of Theorem \ref{almosttheorem}. 
\end{enumerate}
Now, we certainly have enough numerical evidences in order to believe in Theorem \ref{almosttheorem}, and on the other hand, we also have a precise idea of the technicities needed for a general proof.
\end{examples}\bigskip

\begin{lemma}\label{idenpart}
For any positive integers $s\geq l$,
$$\sum_{\substack{|\mu|=s\\ \ell(\mu)=l}}\frac{1}{\prod_{i \geq 1}m_{i}(\mu)!}=\frac{1}{l!}\,\binom{s-1}{l-1}.$$
Similarly, given (random) variables $Y_{p\geq 1}$, one has
$$\sum_{\substack{|\mu|=s\\ \ell(\mu)=l}}\frac{1}{\prod_{i \geq 1}m_{i}(\mu)!}\left(\sum_{p \in \mu}Y_{p}\right)=\frac{1}{l-1!}\,\sum_{u=0}^{s-l} \binom{l-2+u}{u}\,Y_{s-l+1-u}.$$
\end{lemma}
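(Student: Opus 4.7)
Both identities are elementary combinatorial statements about partitions, and I would prove them by exchanging summations and reducing to the classical stars-and-bars count.

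For the first identity, I would start from the well-known fact that the number of compositions of $s$ into $l$ ordered positive parts equals $\binom{s-1}{l-1}$. Any such composition is obtained by choosing an ordering of the parts of a unique underlying partition $\mu$ with $|\mu|=s$ and $\ell(\mu)=l$, and the number of distinct orderings of the multiset of parts of $\mu$ is precisely $\frac{l!}{\prod_{i\geq 1}m_i(\mu)!}$. Summing over partitions and dividing by $l!$ gives the claimed equality.

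For the second identity, the first step is to rewrite the inner sum over parts (with multiplicity) as $\sum_{p\in\mu}Y_p=\sum_{i\geq 1}m_i(\mu)\,Y_i$ and swap the order of summation:
\begin{equation*}
\sum_{\substack{|\mu|=s\\ \ell(\mu)=l}}\frac{1}{\prod_{j\geq 1}m_j(\mu)!}\sum_{p\in\mu}Y_p=\sum_{i\geq 1}Y_i\sum_{\substack{|\mu|=s\\ \ell(\mu)=l}}\frac{m_i(\mu)}{\prod_{j\geq 1}m_j(\mu)!}.
\end{equation*}
The crucial observation is that whenever $m_i(\mu)\geq 1$, one has $m_i(\mu)/m_i(\mu)!=1/(m_i(\mu)-1)!$, so the inner sum is in bijection with the sum over partitions $\mu'$ of $s-i$ with $\ell(\mu')=l-1$ obtained by deleting one part of size $i$ from $\mu$. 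Applying the first identity to these $\mu'$ gives $\frac{1}{(l-1)!}\binom{s-i-1}{l-2}$, provided $i\leq s-l+1$.

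It only remains to reindex: setting $u=s-l+1-i$, the range $1\leq i\leq s-l+1$ becomes $0\leq u\leq s-l$, with $s-i-1=l-2+u$ and $Y_i=Y_{s-l+1-u}$, which yields exactly the stated formula. There is no real obstacle here; the only point requiring a bit of care is the bijection ``remove one part of size $i$'' and the subsequent change of variables.
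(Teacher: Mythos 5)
Your proof is correct and follows essentially the same route as the paper: the first identity via counting compositions of $s$ into $l$ ordered positive parts weighted by the multinomial number of orderings of a partition, and the second by writing $\sum_{p\in\mu}Y_p=\sum_i m_i(\mu)Y_i$, exchanging sums, using $m_i(\mu)/m_i(\mu)!=1/(m_i(\mu)-1)!$ to reduce to partitions of $s-i$ of length $l-1$, and reindexing with $u=s-l+1-i$. No gaps to report.
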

\begin{proof}
The first formula is equivalent to
$$\sum_{\substack{|\mu|=s\\ \ell(\mu)=l}}\binom{l}{m_{1}(\mu),\ldots,m_{s}(\mu)}=\binom{s-1}{l-1},$$
and both sides of this equation correspond to the number of $l$-uplets of positive integers $(s_{1},\ldots,s_{l})$ such that $s_{1}+\cdots+s_{l}=s$, so this is indeed true. As for the second formula, one sees that:
\begin{align*}S_{2}&=\sum_{\substack{|\mu|=s\\ \ell(\mu)=l}}\frac{1}{\prod_{i \geq 1}m_{i}(\mu)!}\left(\sum_{p \in \mu}Y_{p}\right)=\sum_{\substack{|\mu|=s\\ \ell(\mu)=l}}\frac{1}{\prod_{i \geq 1}m_{i}(\mu)!}\left(\sum_{i=1}^{s-l+1}m_{i}(\mu)\,Y_{i}\right)\\
&=\sum_{\substack{|\mu|=s\\ \ell(\mu)=l}}\sum_{i=1}^{s-l+1}\frac{1}{\left(\prod_{\substack{j \geq 1\\ j \neq i}}m_{j}(\mu)!\right)\,(\max(0,m_{i}(\mu)-1))!}\,Y_{i}
\end{align*}
because the biggest possible part of a partition of length $l$ and size $s$ is $s-l+1$. Then, by reverting the order of summation, one obtains:
$$S_{2}=\sum_{i=1}^{s-l+1}Y_{i}\left(\sum_{\substack{|\mu|=s-i\\\ell(\mu)=l-1}} \frac{1}{\prod_{i \geq 1}m_{i}(\mu)!}\right)=\frac{1}{l-1!}\sum_{i=1}^{s-l+1}\binom{s-i-1}{l-2}\,Y_{i},$$
and this is exactly the second formula up to the change of index $u=s-l+1-i$.
\end{proof}\bigskip

\begin{lemma}\label{nasty}
Fix a parameter $c>0$, and a positive integer $k$. The scaled observable $\tilp_{k}(\lambda)/n^{\frac{k-1}{2}}$ is equal to
$$\sqrt{n}\left(\sum_{l=1}^{\lfloor \frac{k}{2}\rfloor} \frac{k!}{(k-l)\,k-2l!\,l-1!\,l!}\,c^{k-2l }\right)+\sum_{l=1}^{\lfloor \frac{k}{2}\rfloor}\frac{k!}{k-l!\,l-1!} \left( \sum_{u=0}^{k-2l} \binom{l-2+u}{u}c^{u}\,X_{1+k-2l-u} \right),$$
plus a random variable $V_{k}(c)$ that under the Schur-Weyl measures $\mathrm{SW}_{n,1/2,c}$ converges in probability towards a constant $L_{k}(c)$ as $n$ goes to infinity.
\end{lemma}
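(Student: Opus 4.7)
The plan is to combine Proposition \ref{changeofbasis} with a systematic substitution and then extract the explicit coefficients through the two identities of Lemma \ref{idenpart}. Concretely, Proposition \ref{changeofbasis} gives
$$\tilp_k = \sum_{|\mu|+\ell(\mu)=k}\frac{k^{\downarrow \ell(\mu)}}{\prod_i m_i(\mu)!}\,\prod_{p \in \mu}\varSigma_p \,\,+\,\, G_k,$$
where $G_k$ is an observable of weight $\leq k-1$. In each factor $\varSigma_p$ I would substitute $\varSigma_p = n^{p/2}(X_p + c^{p-1}\sqrt{n})$, with the convention $X_1:=0$ (consistent with $\varSigma_1=n$). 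Dividing by $n^{(k-1)/2}$, a partition $\mu$ with $\ell(\mu)=l$ and $|\mu|=k-l$ contributes $n^{(1-l)/2}\,\prod_{p\in\mu}(X_p + c^{p-1}\sqrt{n})$ times $k^{\downarrow l}/\prod_i m_i!$.

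Expanding the product of $l$ binomials separates the contribution into three regimes according to the number $r$ of $X$-factors. For $r=0$ one obtains the deterministic piece $\sqrt{n}\,c^{k-2l}$; summed over $\mu$, the first identity of Lemma \ref{idenpart} (with $s=k-l$) converts $\sum_{|\mu|=k-l,\,\ell(\mu)=l}\frac{1}{\prod m_i!}$ into $\frac{1}{l!}\binom{k-l-1}{l-1}$, producing exactly the mean coefficient $\frac{k!}{(k-l)(k-2l)!\,l!\,(l-1)!}\,c^{k-2l}$. For $r=1$ one singles out a factor $X_p$ and the piece is $\sum_{p\in\mu}c^{k-2l-p+1}X_p$; setting $Y_p = c^{k-2l-p+1}X_p$ and invoking the second identity of Lemma \ref{idenpart}, the sum over $\mu$ becomes $\frac{1}{(l-1)!}\sum_{u=0}^{k-2l}\binom{l-2+u}{u}\,c^u\,X_{1+k-2l-u}$, which after multiplication by $k^{\downarrow l}$ is precisely the linear combination of the $X_j$'s announced in the statement. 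For $r\geq 2$ the factor is of the form $n^{(1-r)/2}$ times a quantity bounded in probability, since each $X_p$ is tight by Theorem \ref{sniadytheo}, so all such terms vanish in probability.

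It remains to control the remainder $V_k(c)$, which consists of the vanishing $r\geq 2$ pieces together with $G_k/n^{(k-1)/2}$. For the latter I would invoke the factorization of central characters in top homogeneous component combined with the limits $R_l(\lambda^*)\to c^{l-2}$ from \S\ref{limitshape}, which give $\varSigma_\nu/n^{\wt(\varSigma_\nu)/2}\to c^{|\nu|-\ell(\nu)}$ in probability for every $\nu$; consequently $g/n^{(k-1)/2}$ converges in probability to a constant for any $g$ of weight $\leq k-1$ (to zero when $\wt(g)<k-1$, to an explicit constant when $\wt(g)=k-1$). Hence $V_k(c)$ converges in probability to a constant $L_k(c)$, as claimed. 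The argument is conceptually straightforward; the real obstacle is bookkeeping the powers of $n$ and $c$ through the expansion and verifying that the two combinatorial identities of Lemma \ref{idenpart} align exactly with the stated coefficients, for which the explicit computations already given for $k\leq 4$ serve as a reliable sanity check.
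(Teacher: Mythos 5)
Your proposal is correct and follows essentially the same route as the paper: expand $\tilp_k$ via Proposition \ref{changeofbasis}, substitute $\varSigma_p=n^{p/2}(X_p+c^{p-1}\sqrt{n})$, keep the zero- and one-$X$ terms using the two identities of Lemma \ref{idenpart}, and discard the higher-order and lower-weight pieces by tightness and convergence in probability. The only (immaterial) difference is that the paper controls the weight-$(k-1)$ remainder by expanding it in the free-cumulant basis $R_\mu$ and using $R_\mu(\lambda^*)\to c^{|\mu|-2\ell(\mu)}$, whereas you use the $\varSigma_\nu$ basis together with the top-component factorization, which amounts to the same estimate.
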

\begin{proof}
Because of Proposition \ref{changeofbasis}, one can write
$$\tilp_{k}=\left(\sum_{|\mu|+\ell(\mu)=k}\frac{k^{\downarrow \ell(\mu)}}{\prod_{i \geq 1}m_{i}(\mu)!} \prod_{i \geq 1}(\varSigma_{i})^{m_{i}}\right)+D_{k-1},$$
where $D_{k-1}$ is an observable of weight $k-1$. Let us decompose $D_{k-1}$ in the basis of free cumulants: 
$$D_{k-1}=\sum_{\substack{|\mu| \leq k-1\\ m_{1}(\mu)=0}}d_{\mu,k}\,R_{\mu}$$
Then, one knows that $R_{\mu}/n^{|\mu|/2}$ converges in probability towards $c^{|\mu|-2\ell(\mu)}$, so $V_{k}(c)=D_{k-1}/n^{\frac{k-1}{2}}$ converges in probability to
$$L_{k}(c)=\sum_{\substack{|\mu|=k-1  \\ m_{1}(\mu)=0}} d_{\mu,k}\,c^{|\mu|-2\ell(\mu)}.$$
We won't need the precise value of $L_{k}(c)$, so let us focus on the remaining part $\tilp_{k}-D_{k-1}$. One uses the same trick as for the term $(\varSigma_{2})^{2}$ in the expansion of $\widetilde{p}_{6}/n^{5/2}$:
\begin{align*}\frac{\tilp_{k}-D_{k-1}}{n^{\frac{k-1}{2}}}&=\sqrt{n}\left(\sum_{|\mu|+\ell(\mu)=k}\frac{k^{\downarrow \ell(\mu)}}{\prod_{i \geq 1}m_{i}(\mu)!} \prod_{i \geq 1}\left(\frac{\varSigma_{i}}{n^{\frac{i+1}{2}}}\right)^{m_{i}(\mu)}\right)\\
&=\sqrt{n}\left(\sum_{|\mu|+\ell(\mu)=k}\frac{k^{\downarrow \ell(\mu)}}{\prod_{i \geq 1}m_{i}(\mu)!} \prod_{i \geq 1}\left(\frac{X_{i}}{\sqrt{n}}+c^{i-1}\right)^{m_{i}(\mu)}\right)
\end{align*}
With $\mu$ fixed, one expands the product $\prod_{i\geq 1}\left(\frac{X_{i}}{\sqrt{n}}+c^{i-1}\right)^{m_{i}(\mu)}$ ; the two leading terms are
$$A(\mu)+\frac{B(\mu)}{\sqrt{n}}=c^{|\mu|-\ell(\mu)}+\frac{1}{\sqrt{n}}\sum_{p \in \mu} X_{p} \,c^{|\mu|-\ell(\mu)-p+1},$$
and the remainder is a $O(n^{-1})$. So,
$$\frac{\tilp_{k}-D_{k-1}}{n^{\frac{k-1}{2}}}=\sqrt{n}\left(\sum_{|\mu|+\ell(\mu)=k}\frac{k^{\downarrow \ell(\mu)}\,A(\mu)}{\prod_{i \geq 1}m_{i}(\mu)!}\right) + \left(\sum_{|\mu|+\ell(\mu)=k}\frac{k^{\downarrow \ell(\mu)}\,B(\mu) }{\prod_{i \geq 1}m_{i}(\mu)!}\right)+ O(n^{-1/2}).$$
The first part is equal to
$$\sqrt{n}\left(\sum_{l=1}^{\lfloor \frac{k}{2}\rfloor}\sum_{\substack{\mu \in \Part_{k-l}\\ |\mu|=l}} \frac{k!}{k-l!}
\,\frac{c^{k-2l }}{\prod_{i \geq 1}m_{i}(\mu)!}\right)=\sqrt{n}\left(\sum_{l=1}^{\lfloor \frac{k}{2}\rfloor} \frac{k!}{(k-l)\,k-2l!\,l-1!\,l!}\,c^{k-2l }\right)$$
because of the first part of Lemma \ref{idenpart}. As for the second part, we set $Y_{p}=X_{p}/c^{p-1}$ and we apply the second part of Lemma \ref{idenpart}, so we obtain
$$\sum_{l=1}^{\lfloor \frac{k}{2}\rfloor} \sum_{\substack{\mu \in \Part_{k-l}\\ \ell(\mu)=l}}\sum_{p \in \mu}\frac{k!}{k-l!}\,\frac{c^{k-2l}\,Y_{p}}{{\prod_{i \geq 1}m_{i}(\mu)!}}=\sum_{l=1}^{\lfloor \frac{k}{2}\rfloor}\frac{k!}{k-l!\,l-1!} \left( \sum_{u=0}^{k-2l} \binom{l-2+u}{u}Y_{1+k-2l-u} \right) c^{k-2l}.$$
Since $Y_{1+k-2l-u}\,c^{k-2l}=X_{1+k-2l-u}\,c^{u}$,  the proof is achieved.
\end{proof}\bigskip

\begin{lemma}
Up to a random variable that converges in probability to zero under the Schur-Weyl measures $\mathrm{SW}_{n,1/2,c}$, the translated $k$-th moment of the deviation $\frac{\sqrt{n}}{2}\int_{\R}(s-c)^{k}\,\Delta_{\lambda,c}(s)\,ds$ is equal to
$$\frac{1}{k+1}\sum_{l=0}^{k-1}\left(\sum_{m=0}^{\lfloor \frac{l}{2} \rfloor} \binom{k+1}{m}\binom{k+1-2m}{l-2m}\,(-c)^{l-2m}\right)X_{k+1-l}(\lambda).$$
\end{lemma}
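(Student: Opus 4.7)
My plan is to substitute Lemma \ref{nasty} into Lemma \ref{transmoment}, verify that the divergent $\sqrt{n}$-piece cancels against the deterministic correction intrinsic to Lemma \ref{transmoment}, and then identify the remaining $X$-linear part with the stated formula via a single Vandermonde-type identity.

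After applying Lemma \ref{transmoment} one has
$$\frac{\sqrt{n}}{2}\int_{\R}(s-c)^{k}\Delta_{\lambda,c}(s)\,ds = \frac{1}{(k+1)(k+2)}\Bigg(\sum_{l=2}^{k+2}\binom{k+2}{l}(-c)^{k+2-l}\frac{\tilp_{l}(\lambda)}{n^{(l-1)/2}}-\sqrt{n}\,C_{k}(c)\Bigg),$$
with $C_{k}(c)=\sum_{m\geq 1}\binom{k+2}{m}(-c)^{k+2-2m}$. Lemma \ref{nasty} decomposes each $\tilp_{l}(\lambda)/n^{(l-1)/2}$ as $\sqrt{n}\,\tilp_{l}(\Omega_{c})+T_{l}(\lambda,c)+V_{l}(c)$, where $T_{l}$ is the explicit linear combination of the $X_{i}$'s, and $V_{l}(c)\to 0$ in probability (the lower-order remainder $D_{l-1}$ has weight at most $l-2$, since every $\varSigma_{\mu}$ occurring in the change-of-basis expansion of Proposition \ref{changeofbasis} for $\tilp_{l}$ satisfies $|\mu|+\ell(\mu)\equiv l\pmod 2$, a parity that is preserved under Kerov's relations between the $\varSigma$'s and the $R$'s). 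The $\sqrt{n}$-divergent terms then assemble into
$$\frac{\sqrt{n}}{(k+1)(k+2)}\Bigg(\sum_{l=2}^{k+2}\binom{k+2}{l}(-c)^{k+2-l}\tilp_{l}(\Omega_{c})-C_{k}(c)\Bigg),$$
which vanishes identically: this is nothing but Lemma \ref{transmoment} applied to the continuous Young diagram $\Omega_{c}$ itself, whose deviation from itself is zero. Equivalently, using the explicit polynomials $\tilp_{l}(\Omega_{c})$ from Proposition \ref{obslimitshape}, the cancellation is a direct binomial identity.

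It remains to expand the $X$-linear part. Substituting the formula for $T_{l}$ in $(k+1)^{-1}(k+2)^{-1}\sum_{l}\binom{k+2}{l}(-c)^{k+2-l}T_{l}$ and reindexing by $i=1+l-2j-u$ and $m=j-1$, the coefficient of $X_{i}(\lambda)$ becomes
$$(-c)^{k+1-i-2m}\,\frac{\binom{k+1}{m}}{k+1}\,\sum_{u\geq 0}(-1)^{u}\binom{k+1-m}{i+m+u}\binom{m-1+u}{u}.$$
Using the symmetry $\binom{k+1-m}{i+m+u}=\binom{k+1-m}{(k+1-2m-i)-u}$, the inner alternating sum is recognized as the Cauchy product for the coefficient of $y^{k+1-2m-i}$ in $(1+y)^{k+1-m}(1+y)^{-m}=(1+y)^{k+1-2m}$, and therefore equals $\binom{k+1-2m}{i}$. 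Setting $l'=k+1-i$ and summing over $m\in[0,\lfloor l'/2\rfloor]$ produces exactly the announced formula; the mixed contributions that arise from squaring the $X_{i}$'s in the expansion of $\prod_{i}(X_{i}/\sqrt{n}+c^{i-1})^{m_{i}(\mu)}$, as visible in the $k=4$ example above, are all absorbed into the remainder that tends to zero in probability. The main technical work is the combinatorial collapse performed in this last step, which the worked cases $k\leq 4$ preceding the statement already rehearse in full detail.
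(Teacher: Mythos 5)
Your proposal is correct and follows the same overall skeleton as the paper's proof (substitute Lemma \ref{nasty} into Lemma \ref{transmoment}, cancel the $\sqrt{n}$-divergent part, then collapse the $X$-linear part), but two of your steps take a genuinely different route. For the cancellation of the divergent term, you observe that the deterministic part of Lemma \ref{nasty} is exactly $\sqrt{n}\,\tilp_{l}(\Omega_{c})$ (by Proposition \ref{obslimitshape}) and that the required identity is Lemma \ref{transmoment} evaluated at $\Omega_{c}$; the paper instead checks the corresponding alternating binomial sum by hand, via the beta integral $\int_{0}^{1}(1-x)^{k+2-2m}x^{m-1}\,dx$. Your argument is cleaner, with the small caveat that Lemma \ref{transmoment} is stated for partitions of size $n$, so one should remark that the identity underlying it (the computation in the proof of Lemma \ref{moment}) holds verbatim for continuous diagrams, or else do the binomial identity directly, as you indicate. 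For the final collapse, you replace the paper's WZ-certified hypergeometric identity by a Chu--Vandermonde computation: the inner alternating sum is the coefficient of $y^{k+1-2m-i}$ in $(1+y)^{k+1-m}(1+y)^{-m}=(1+y)^{k+1-2m}$, hence equals $\binom{k+1-2m}{i}$; I checked that the summation range arising from $2\leq l\leq k+2$ coincides exactly with the support of the binomials, so this more elementary identity does the job, and the reindexed coefficient $\frac{1}{k+1}\binom{k+1}{m}\binom{k+1-m}{i+m+u}$ matches the paper's $\frac{k!}{(k+2-l)!\,(l-m)!\,(m-1)!}$.

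The one step you should tighten is the claim that $V_{l}(c)\to 0$ in probability, i.e.\ that the constants $L_{l}(c)$ of Lemma \ref{nasty} all vanish. Proposition \ref{changeofbasis} only describes the top homogeneous component of $\tilp_{l}$, so by itself it says nothing about the lower-order remainder $D_{l-1}$; the fact you need --- that every $\varSigma_{\mu}$ in the full expansion of $\tilp_{l}$ satisfies $|\mu|+\ell(\mu)\equiv l \ (\mathrm{mod}\ 2)$, whence $D_{l-1}$ has weight at most $l-2$ --- is true, but your parenthetical does not prove it. The standard argument is the involution $\omega(s)\mapsto\omega(-s)$ (transposition of diagrams), under which $\tilp_{j}\mapsto(-1)^{j}\tilp_{j}$, $R_{j}\mapsto(-1)^{j}R_{j}$ and $\varSigma_{\mu}\mapsto(-1)^{|\mu|-\ell(\mu)}\varSigma_{\mu}$, forcing all homogeneous components of $\tilp_{l}$ to have weight of the same parity as $l$. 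Alternatively you can simply fall back on the paper's device: keep the unknown constants, and use Proposition \ref{linearfunctional} (the translated moment converges to a centered variable, the divergent part cancels, and the $X$-linear part is asymptotically centered) to conclude that $\sum_{2\leq l\leq k+2}\frac{k!}{l!\,(k+2-l)!}(-c)^{k+2-l}L_{l}(c)=0$, which is all the lemma needs. Once the parity fact is justified, your version is in fact slightly stronger, since it shows each $L_{l}(c)=0$ individually rather than only this particular combination.
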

\begin{proof}
Because of Lemma \ref{transmoment} and Lemma \ref{nasty}, the translated $k$-th moment of the deviation is the combination of the following terms:
\begin{align*}
A&=\sqrt{n}\,\left(\sum_{2\leq l \leq k+2}\sum_{m=1}^{\lfloor \frac{l}{2}\rfloor } \frac{k!}{(l-m)\,k+2-l!\,l-2m!\,m-1!\,m!}\,(-1)^{k+2-l}\, c^{k+2-2m}\,\right)\\
B&=\sum_{2\leq l \leq k+2}\sum_{m=1}^{\lfloor \frac{l}{2}\rfloor }\sum_{u=0}^{l-2m} \frac{k!}{k+2-l!\,l-m!\,m-1!}\binom{m-2+u}{u}(-1)^{k+2-l}\,c^{k+2-l+u}\,X_{1+l-2m-u}\\
C&=\sum_{2\leq l \leq k+2} \frac{k!}{l!\,k+2-l!}\,(-c)^{k+2-l}\,V_{l}(c)\\
D&=-\sqrt{n}\,\left(\sum_{2\leq 2m \leq k+2}\frac{k!}{m!\,k+2-m!}\,(-c)^{k+2-2m}\right)
\end{align*}
In $A$, we revert the order of summation:
$$\frac{A}{\sqrt{n}}=\sum_{2\leq 2m \leq k+2}\frac{k!}{m!\,k+2-2m!\,m-1!}(-c)^{k+2-2m}\left\{\sum_{l=2m}^{k+2}(-1)^{l}\frac{k+2-2m!}{(l-m)\,k+2-l!\,l-2m!}\right\}$$
The sum between brackets can be written as:
\begin{align*}\sum_{u=0}^{k+2-2m}\binom{k+2-2m}{u}\frac{(-1)^{u}}{u+m}&=\int_{0}^{1}\left(\sum_{u=0}^{k+2-2m}\binom{k+2-2m}{u}(-x)^{u}\right)x^{m-1}\,dx\\
&=\int_{0}^{1}(1-x)^{k+2-2m}\,x^{m-1}\,dx=\frac{k+2-2m!\,m-1!}{k+2-m!}
\end{align*}
Hence, $A+D=0$. Then, $B$ is asymptotically a centered gaussian variable, and $C$ converges towards $\sum_{2\leq l \leq k+2} \frac{k!}{l!\,k+2-l!}\,(-c)^{k+2-l}\,L_{l}(c)$. But because of Proposition \ref{linearfunctional}, the translated $k$-th moment of the deviation converges in law towards a centered gaussian variable, so
$$\sum_{2\leq l \leq k+2} \frac{k!}{l!\,k+2-l!}\,(-c)^{k+2-l}\,L_{l}(c)=0$$
 (this is why we did not need the precise values of the $L_{l}(c)$'s). Finally, one can simplify $B$ as follows. The index $1+l-2m-u$ takes its values in the interval $\lle 1,k+1\rre$, so $B$ can be written as a linear combination $\sum_{v=1}^{k+1} f(v,c)\,X_{v}$. Since $X_{1}=0$, one can actually take $v$ in $\lle 2,k+1 \rre$, so let us make the change of index $1+l-2m-u=k+1-x$ with $x \in \lle 0,k-1\rre$. Then,
 \begin{align*}
 B&=\sum_{x=0}^{k-1}X_{k+1-x}\left(\sum_{\substack{k+2\geq l \geq 2m \geq 2\\l-2m\geq k-x}} \frac{(-1)^{k+2-l}\,c^{x-2m+2}\,k!\,l-m-k+x-2!}{k+2-l!\,l-m!\,m-1!\,m-2!\,l-2m-k+x!}\right)\\
 &=\sum_{x=0}^{k-1}X_{k+1-x}\left(\sum_{y=0}^{\lfloor\frac{x}{2}\rfloor} \sum_{z=0}^{x-2y} \frac{(-1)^{x-2y-z}\,c^{x-2y}\,k!\,y+z-1!}{y!\,y-1!\,z!\,k+1-x+y+z!\,x-2y-z!}\right)
 \end{align*}
 with $y=m-1$ and $z=l-2m-k+x$. Then we use another hypergeometric identity
$$\sum_{z=0}^{Z} \frac{\alpha+z!}{\beta+z!}\binom{Z}{z} (-1)^{z}= \frac{\alpha!\,\beta-\alpha+Z-1!}{\beta-\alpha-1!\,\beta+Z!}$$
that holds for any $\beta> \alpha$, and that can again be shown by using methods \emph{\`a la} Wilf-Zeilberger, see \cite{PWZ97}. It implies that:
\begin{align*}
B&=\sum_{x=0}^{k-1}X_{k+1-x}\left(\sum_{y=0}^{\lfloor \frac{x}{2} \rfloor}\frac{(-c)^{x-2y}\,k!}{y!\,y-1!\,x-2y!}\sum_{z=0}^{Z=x-2y} \frac{y-1+z!}{y+k+1-x+z!}\binom{Z}{z}(-1)^{z}\right)\\
&=\sum_{x=0}^{k-1}X_{k+1-x}\left(\sum_{y=0}^{\lfloor \frac{x}{2} \rfloor}\frac{(-c)^{x-2y}\,k!\,k+1-2y!}{y!\,x-2y!\,k+1-x!\,k+1-y!}\right)\\
&=\frac{1}{k+1}\sum_{x=0}^{k-1}X_{k+1-x}\left(\sum_{y=0}^{\lfloor \frac{x}{2} \rfloor} \binom{k+1}{y}\binom{k+1-2y}{x-2y}\,(-c)^{x-2y}\right)
\end{align*}
Notice that a slight abuse of notation has been made when considering expressions such as $y-1!$ with $y=0$; nethertheless, it can be checked that in the end our formal computation does give the exact result.
\end{proof}
\begin{proof}[Proof of Theorem \ref{almosttheorem}]
The explicit expansion of $U_{k}(X)$ is
$$U_{k}(X)=\sum_{m=0}^{\lfloor \frac{k}{2}\rfloor} (-1)^{m}\binom{k-m}{m}\,X^{k-2m}$$
see \cite[p. 33]{IO02}. Moreover, given two set of variables $a_{0},a_{1},\ldots$ and $b_{0},b_{1},\ldots$, the following set of relations are equivalent
$$\left\{b_{k}=\sum_{m=0}^{\lfloor\frac{k}{2} \rfloor}\binom{k}{m} \,a_{k-2m}\right\}_{\!k \geq 0}\quad\iff\quad\left\{a_{k}=\sum_{m=0}^{\lfloor\frac{k}{2} \rfloor}(-1)^{m}\,\frac{k}{k-m}\binom{k-m}{m} \,b_{k-2m}\right\}_{\!k\geq 0}$$
see \cite[p. 36]{IO02}. We denote by $\frac{B_{k+1}}{k+1}$ the random variable computed in the previous lemma. Then, the linear functional of the deviation associated to the translated Chebyshev polynomial $U_{k}(s-c)$ is:
$$\frac{\sqrt{n}}{2}\scal{U_{k}(s-c)}{\Delta_{\lambda,c}(s)}=\sum_{m=0}^{\lfloor \frac{k}{2} \rfloor} (-1)^{m} \binom{k-m}{m}\,\frac{B_{k+1-2m}}{k+1-2m}=\sum_{m=0}^{\lfloor \frac{k}{2} \rfloor} (-1)^{m} \binom{k+1-m}{m}\,\frac{B_{k+1-2m}}{k+1-m}$$
Since $B_{0}=B_{1}=0$, the index $m$ can in fact be taken up to $\lfloor \frac{k+1}{2}\rfloor$, so the expression is exactly $\frac{A_{k+1}}{k+1}$, where the $A$'s and the $B$'s are related as mentioned before. Consequently, it is sufficient to identify the random variables $A_{k}$ such that:
$$\forall k,\,\,\,\sum_{l=0}^{k-1}\left(\sum_{m=0}^{\lfloor \frac{l}{2} \rfloor} \binom{k+1}{m}\binom{k+1-2m}{l-2m}\,(-c)^{l-2m}\right)X_{k+1-l}=\sum_{m=0}^{\lfloor \frac{k+1}{2}\rfloor} \binom{k+1}{m}\,A_{k+1-2m}$$
In the left-hand side, we revert the order of summation and thus obtain:
\begin{align*}B_{k+1}&=\sum_{m=0}^{\lfloor \frac{k-1}{2}\rfloor}\binom{k+1}{m}\left(\sum_{l=2m}^{k-1}\binom{k+1-2m}{l-2m}\,(-c)^{l-2m}\,X_{k+1-l}\right)\\ 
&=\sum_{m=0}^{\lfloor \frac{k-1}{2}\rfloor}\binom{k+1}{m}\left(\sum_{l=0}^{k-1-2m}\binom{k+1-2m}{l}\,(-c)^{l}\,X_{k+1-2m-l}\right)\\
&=\sum_{m=0}^{\lfloor \frac{k+1}{2}\rfloor}\binom{k+1}{m}\left(\sum_{l=0}^{k-1-2m}\binom{k+1-2m}{l}\,(-c)^{l}\,X_{k+1-2m-l}\right)\quad\text{because }X_{0}=X_{1}=0.
\end{align*}
As a consequence,
$$A_{k+1}=\sum_{l=0}^{k-1}\binom{k+1}{l}\,(-c)^{l}\,X_{k+1-l},$$
and since $\frac{A_{k+1}}{k+1}=\frac{\sqrt{n}}{2}\scal{U_{k}(s-c)}{\Delta_{\lambda,c}(s)}$,  the proof of Theorem \ref{almosttheorem} is completed.
\end{proof}
\bigskip
\bigskip

\section{An extension of Kerov's central limit theorem}\label{ksw}
Let us denote by $(X_{k,\infty})_{k \geq 2}$ the limit in law of the random process $(X_{k})_{k \geq 2}$ under the Schur-Weyl measures $\mathrm{SW}_{n,1/2,c}$; the covariance matrix of this centered gaussian process has been computed in \S\ref{sniady}. One sees easily that in the gaussian space generated by the process $(X_{k,\infty})_{k \geq 2}$, the orthogonal basis deduced from $(X_{k,\infty})_{k \geq 2}$ by performing the Gram-Schmidt orthogonalisation procedure is:
$$ Y_{k,\infty}=\sum_{l=0}^{k-2} \binom{k}{l}(-c)^{l}\,X_{k-l,\infty}$$
As a consequence, the joint limit in law of the random variables $(\frac{\sqrt{n}}{2}\scal{U_{k}(s-c)}{\Delta_{\lambda,c}(s)})_{k\geq 1}$ is simply $(\frac{Y_{k+1,\infty}}{k+1})_{k\geq 1}$. So, Theorem \ref{almosttheorem} can be restated in the following way:
\begin{theorem}[Central limit theorem for Schur-Weyl measures]\label{global}
Under the measures $\mathrm{SW}_{n,1/2,c}$, the random process of scaled deviations
$$\left(\frac{\sqrt{n}}{2}\scal{U_{k}(s-c)}{\Delta_{\lambda,c}(s)}\right)_{k\geq 1}$$
converges in finite dimensional laws towards a gaussian process $(\xi_{k+1})_{k \geq 1}$ whose coordinates $\xi_{k+1}$ are independant normal variables of mean $0$ and variance $1/(k+1)$. Consequently, in the interval $[c-2,c+2]$, the scaled deviation $\frac{\sqrt{n}}{2}\Delta_{\lambda,c}(s)$ converges in distribution towards the generalized gaussian process
$$\Delta_{c}(s)=\Delta_{c}(c+2\cos \theta)=\frac{1}{\pi}\sum_{k=2}^{\infty} \frac{\xi_{k}}{k}\,\sin(k\theta).$$
\end{theorem}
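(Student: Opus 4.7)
The plan is to combine Theorem \ref{almosttheorem}, Theorem \ref{sniadytheo} and the Gram--Schmidt observation stated just before the theorem. Theorem \ref{almosttheorem} rewrites each linear functional as
$$\frac{\sqrt{n}}{2}\scal{U_{k}(s-c)}{\Delta_{\lambda,c}(s)}=\frac{Y_{k+1}(\lambda)}{k+1}+o_{P}(1),\qquad Y_{k+1}(\lambda)=\sum_{l=0}^{k-1}\binom{k+1}{l}(-c)^{l}\,X_{k+1-l}(\lambda),$$
while Theorem \ref{sniadytheo}, applied to the measures $\SWa$ with $\alpha=1/2$, yields the joint convergence in finite-dimensional laws of $(X_{l})_{l\geq 2}$ to a centered Gaussian process $(X_{l,\infty})_{l\geq 2}$ with the covariance $v_{l+1,m+1}=\sum_{r\geq 2}\binom{l}{r}\binom{m}{r}\,r\,c^{l+m-2r}$ computed in \S\ref{sniady}. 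Since fixed linear combinations of jointly convergent variables remain jointly convergent, Slutsky's lemma ensures that $\bigl(\frac{Y_{k+1}(\lambda)}{k+1}\bigr)_{k\geq 1}$ converges in finite-dimensional law to the centered Gaussian process $\bigl(\frac{Y_{k+1,\infty}}{k+1}\bigr)_{k\geq 1}$.

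It then remains to identify this limit with the process $(\xi_{k+1})_{k\geq 1}$ of independent normals with variance $1/(k+1)$. Since independence of centered Gaussians is equivalent to orthogonality, the whole matter boils down to the single covariance identity $\mathrm{Cov}(Y_{k+1,\infty},Y_{m+1,\infty})=(k+1)\,\delta_{k,m}$, or equivalently to the Gram--Schmidt assertion recalled just before the theorem, namely that $(Y_{k,\infty}/\sqrt{k})_{k\geq 2}$ is the orthonormal basis extracted from $(X_{k,\infty})_{k\geq 2}$. Substituting the explicit formula for $v$ into
$$\mathrm{Cov}(Y_{k+1,\infty},Y_{m+1,\infty})=\sum_{l,l'}\binom{k+1}{l}\binom{m+1}{l'}(-c)^{l+l'}\,v_{k+2-l,\,m+2-l'},$$
reversing the order of summation, and absorbing the alternating binomials against $(-c)^{l+l'}$ by Chu--Vandermonde, the inner sums should telescope to a single Kronecker $\delta_{k,m}$ with total weight $k+1$. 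I expect this combinatorial identity to be the main obstacle; it is however of the same hypergeometric flavour as Lemma \ref{whatever}, and therefore in the worst case amenable to the Wilf--Zeilberger machinery already invoked in \S\ref{limitshape}.

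Once the convergence in finite-dimensional law towards $(\xi_{k+1})_{k\geq 1}$ is established, the continuous statement follows by a Fourier-sine argument built on the identity $U_{k}(2\cos\theta)=\sin((k+1)\theta)/\sin\theta$. With the substitution $s=c+2\cos\theta$, $\theta\in[0,\pi]$, one has
$$\int_{c-2}^{c+2}U_{k}(s-c)\,f(s)\,ds=2\int_{0}^{\pi}\sin((k+1)\theta)\,f(c+2\cos\theta)\,d\theta$$
for every integrable $f$. Feeding in the ansatz $\Delta_{c}(c+2\cos\theta)=\frac{1}{\pi}\sum_{j\geq 2}\frac{\xi_{j}}{j}\sin(j\theta)$ and invoking the orthogonality relation $\int_{0}^{\pi}\sin(j\theta)\sin(k\theta)\,d\theta=(\pi/2)\,\delta_{j,k}$, the Fourier sine coefficients against $\sin((k+1)\theta)$ of $\Delta_{c}(c+2\cos\theta)$ reproduce exactly the limiting quantities dictated by the first half of the theorem. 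This uniquely identifies the generalised Gaussian process $\Delta_{c}$ and lifts the convergence from linear functionals against Chebyshev polynomials to the announced series representation on $[c-2,c+2]$.
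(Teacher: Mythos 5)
Your route is essentially the paper's: Theorem \ref{almosttheorem} plus \'Sniady's covariance $v_{l+1,m+1}=\sum_{r\geq 2}\binom{l}{r}\binom{m}{r}\,r\,c^{l+m-2r}$, then the identification $\xi_{k+1}=Y_{k+1,\infty}/(k+1)$, and for the distributional statement the Fourier-sine/orthogonality discussion which is exactly what the paper delegates to \cite[\S9]{IO02}. Two remarks. First, the covariance identity you defer to Chu--Vandermonde or Wilf--Zeilberger is genuinely elementary and should just be done: since the covariance matrix factors, one may realize $X_{l,\infty}=\sum_{r\geq 2}\binom{l}{r}c^{l-r}\sqrt{r}\,\zeta_{r}$ with $\zeta_{r}$ i.i.d.\ standard Gaussians, and then $\binom{k+1}{l}\binom{k+1-l}{r}=\binom{k+1}{r}\binom{k+1-r}{l}$ together with the vanishing of alternating binomial sums gives $Y_{k+1,\infty}=\sum_{l=0}^{k-1}\binom{k+1}{l}(-c)^{l}X_{k+1-l,\infty}=\sqrt{k+1}\,\zeta_{k+1}$, whence orthogonality and $\mathrm{Var}(Y_{k+1,\infty})=k+1$ in one line; no hypergeometric machinery is needed.

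Second, your closing claim that the ansatz ``reproduces exactly the limiting quantities'' does not survive the computation you set up: pairing $\frac{1}{\pi}\sum_{j\geq 2}\frac{\xi_{j}}{j}\sin(j\theta)$ against $U_{k}(s-c)$ via $\int_{c-2}^{c+2}U_{k}(s-c)f(s)\,ds=2\int_{0}^{\pi}\sin((k+1)\theta)f(c+2\cos\theta)\,d\theta$ and $\int_{0}^{\pi}\sin(j\theta)\sin(k\theta)\,d\theta=\frac{\pi}{2}\delta_{j,k}$ yields $\xi_{k+1}/(k+1)$, whereas the first half of the theorem gives $\xi_{k+1}$. With the normalization $\mathrm{Var}(\xi_{k})=1/k$ adopted in the first half, the series consistent with the functional limits is $\frac{1}{\pi}\sum_{j\geq 2}\xi_{j}\sin(j\theta)$ (equivalently $\frac{1}{\pi}\sum_{j\geq 2}\frac{\zeta_{j}}{\sqrt{j}}\sin(j\theta)$ with standard Gaussians $\zeta_{j}$); the displayed $\frac{\xi_{j}}{j}$ carries an extra factor. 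This wrinkle is present in the paper's own display, but a proof that claims exact agreement without noticing the factor $k+1$ is not actually verifying the identification, so either flag the normalization or carry the coefficient through honestly.
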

\noindent Hence, Kerov's central limit theorem as enounced in \cite{IO02} for Plancherel measures also holds for Schur-Weyl measures, with exactly the same limit up to a translation on the $x$-axis. In other words, Kerov's gaussian process has a universality property. It would be very interesting to extend this universality property further; as for now, it looks more as a miracle of computations than as a profound result about random partitions. 

\figcap{\includegraphics[scale=0.7]{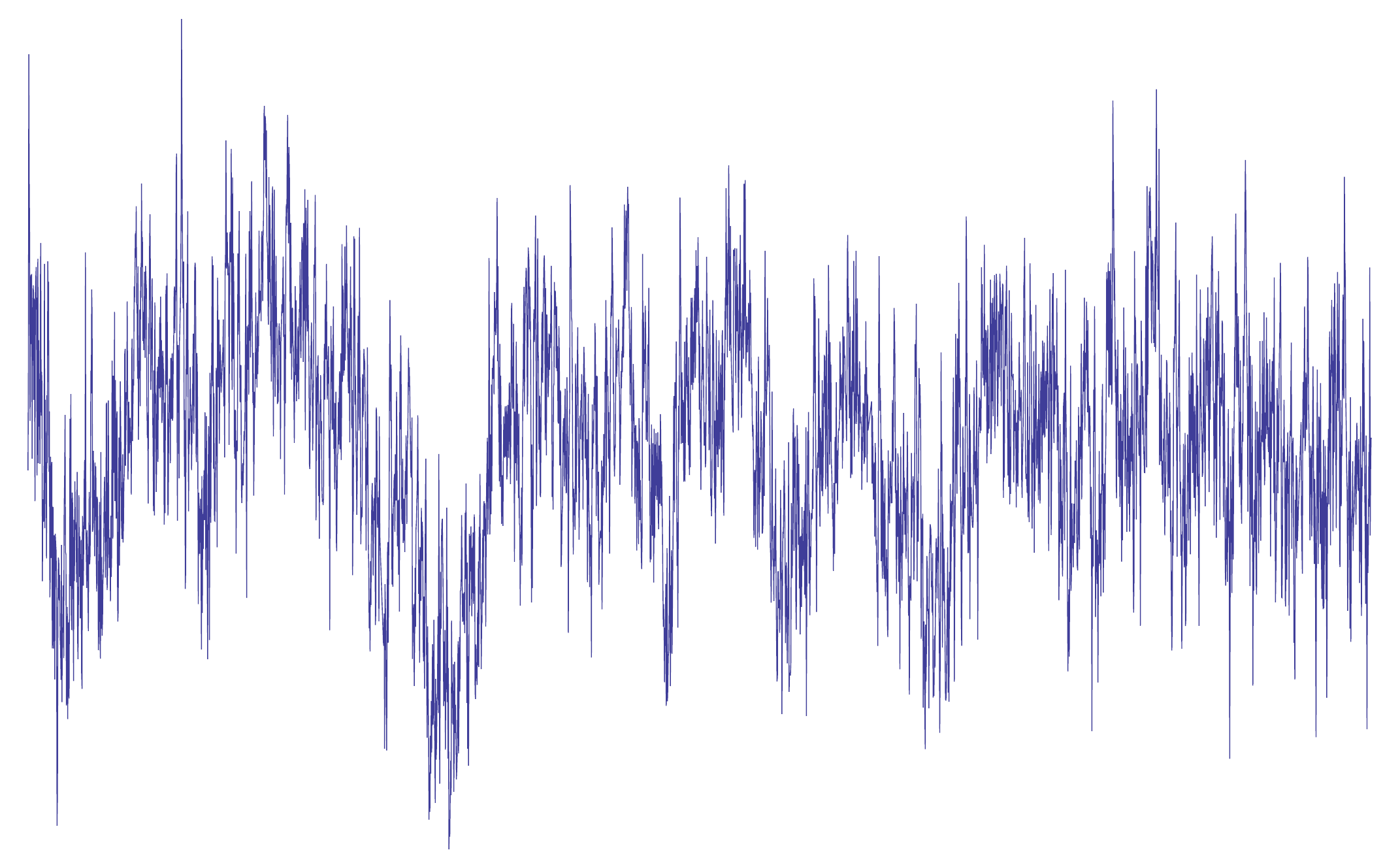}}{Partial sum of the 5000 first terms of the random serie that defines the limiting gaussian process associated to the deviations $\Delta_{\lambda,c}(s)$.\label{gaussianprocess}}

Notice that the random serie $\Delta_{c}(s)$ does not converge ponctually --- this is the same phenomenon as for the well-known free gaussian field, see \cite{She07} and also our figure \ref{gaussianprocess}. However, it makes sense as a distribution, meaning that for any function $f \in \mathscr{C}^{\infty}([c-2,c+2])$, $$\frac{\sqrt{n}}{2}\scal{f(s)}{\Delta_{\lambda,c}(s)} \to \frac{1}{\pi}\sum_{k=2}^{\infty} \frac{\xi_{k}}{k} \left(\int_{c-2}^{c+2} f(s)\,\sin(k\theta(s))\,ds\right),$$
where the right-hand side converges with probability $1$.

\begin{proof}
The first part has just been proved (set $\xi_{k+1}=\frac{Y_{k+1,\infty}}{k+1}$), and for the second part, it is exactly the same discussion as in \cite[\S9]{IO02}.
\end{proof}
\bigskip

\bibliographystyle{alpha}
\bibliography{kerovschurweyl}

\end{document}